\documentclass[12pt]{amsart}
\usepackage{amscd,amsmath,amsthm,amssymb}
\usepackage[left]{lineno}
\usepackage{pstcol,pst-plot,pst-3d}
\usepackage{color}
\usepackage{pstricks}
\usepackage{stmaryrd}
\usepackage[utf8]{inputenc}
\usepackage{pstricks-add}
\usepackage{epstopdf}
\usepackage{graphicx}
\usepackage{color}

\newpsstyle{fatline}{linewidth=1.5pt}
\newpsstyle{fyp}{fillstyle=solid,fillcolor=verylight}
\definecolor{verylight}{gray}{0.97}
\definecolor{light}{gray}{0.9}
\definecolor{medium}{gray}{0.85}
\definecolor{dark}{gray}{0.6}

 %
 %
 %
 \def\NZQ{\mathbb}               
 
 \def\QQ{{\NZQ Q}}
 \def\ZZ{{\NZQ Z}}

 \def\FF{{\NZQ F}}
 \def\GG{{\NZQ G}}

 %
 %
 \def\frk{\mathfrak}               

 \def\mm{{\frk m}}

 %

 \def\G{{\mathcal G}}

 \def\B{{\mathcal B}}

 %
 \def\ab{{\mathbf a}}
 \def\bb{{\mathbf b}}
 \def\xb{{\mathbf x}}

 \def\cb{{\mathbf c}}
 
 \def\eb{{\mathbf e}}
\def\ab{{\mathbf a}}

 \def\Soc{{\mathbf Soc}}

 \def\opn#1#2{\def#1{\operatorname{#2}}} 
 %
 \opn\chara{char} \opn\length{\ell} \opn\pd{pd} \opn\rk{rk}
 \opn\projdim{proj\,dim} \opn\injdim{inj\,dim} \opn\rank{rank}
 \opn\depth{depth} \opn\grade{grade} \opn\height{height}
 \opn\embdim{emb\,dim} \opn\codim{codim}
 
 \opn\Tr{Tr} \opn\bigrank{big\,rank}
 \opn\superheight{superheight}\opn\lcm{lcm}
 \opn\trdeg{tr\,deg}
 \opn\reg{reg} \opn\lreg{lreg} \opn\ini{in} \opn\lpd{lpd}
 \opn\size{size} \opn\sdepth{sdepth}
 \opn\link{link}\opn\fdepth{fdepth}\opn\lex{lex}
 \opn\tr{tr}
 \opn\type{type}
 \opn\gap{gap}
 \opn\arithdeg{arith-deg}
 \opn\Deg{Deg}
 \opn\sat{sat}
 %
 \opn\div{div} \opn\Div{Div} \opn\cl{cl} \opn\Cl{Cl}
 %
 %
 \opn\Spec{Spec} \opn\Supp{Supp} \opn\supp{supp} \opn\Sing{Sing}
 \opn\Ass{Ass} \opn\Min{Min}\opn\Mon{Mon}
 %
 %
 \opn\Ann{Ann} \opn\Rad{Rad} \opn\Soc{Soc}
 %
 %
 \opn\Im{Im} \opn\Ker{Ker} \opn\Coker{Coker} \opn\Am{Am}
 \opn\Hom{Hom} \opn\Tor{Tor} \opn\Ext{Ext} \opn\End{End}
 \opn\Aut{Aut} \opn\id{id}
 
 \opn\nat{nat}
 \opn\pff{pf}
 \opn\Pf{Pf} \opn\GL{GL} \opn\SL{SL} \opn\mod{mod} \opn\ord{ord}
 \opn\Gin{Gin} \opn\Hilb{Hilb}\opn\sort{sort}
 \opn\PF{PF}\opn\Ap{Ap}
 \opn\mult{mult}
 \opn\bight{bight}
 %
 %
 \opn\aff{aff}
 \opn\relint{relint} \opn\st{st}
 \opn\lk{lk} \opn\cn{cn} \opn\core{core} \opn\vol{vol}  \opn\inp{inp} \opn\nilpot{nilpot}
 \opn\link{link} \opn\star{star}\opn\lex{lex}\opn\set{set}
 \opn\width{wd}
 \opn\Fr{F}
 \opn\QF{QF}
 \opn\G{G}
 \opn\type{type}\opn\res{res}
 \opn\conv{conv}
 \opn\Shad{Shad}
 \opn\gr{gr}
 
 %
 %
 
 \def\pot#1#2{#1[\kern-0.28ex[#2]\kern-0.28ex]}

 %
 %
 \opn\dirlim{\underrightarrow{\lim}}
 \opn\inivlim{\underleftarrow{\lim}}
 %
 %
 %

 \let\iso=\cong
 \let\Union=\bigcup
 
 \let\Dirsum=\bigoplus
 
 %
 %
 \let\to=\rightarrow
 
 \def\Implies{\ifmmode\Longrightarrow \else
         \unskip${}\Longrightarrow{}$\ignorespaces\fi}
 \def\implies{\ifmmode\Rightarrow \else
         \unskip${}\Rightarrow{}$\ignorespaces\fi}
 \def\iff{\ifmmode\Longleftrightarrow \else
         \unskip${}\Longleftrightarrow{}$\ignorespaces\fi}

 \let\:=\colon
 \newtheorem{Theorem}{Theorem}[section]
 \newtheorem{Lemma}[Theorem]{Lemma}
 \newtheorem{Corollary}[Theorem]{Corollary}
 
 \newtheorem{Remark}[Theorem]{Remark}
 \newtheorem{Remarks}[Theorem]{Remarks}

 \newtheorem{Definition}[Theorem]{Definition}

 %
 \let\epsilon\varepsilon
 \let\kappa=\varkappa
 %
 %
 \textwidth=15cm \textheight=22cm \topmargin=0.5cm
 \oddsidemargin=0.5cm \evensidemargin=0.5cm \pagestyle{plain}
 %
 %
 \def\qed{\ifhmode\textqed\fi
       \ifmmode\ifinner\quad\qedsymbol\else\dispqed\fi\fi}
 \def\textqed{\unskip\nobreak\penalty50
        \hskip2em\hbox{}\nobreak\hfil\qedsymbol
        \parfillskip=0pt \finalhyphendemerits=0}
 \def\dispqed{\rlap{\qquad\qedsymbol}}
 
 %
 \opn\dis{dis}
 \def\pnt{{\raise0.5mm\hbox{\large\bf.}}}
 
 \opn\Lex{Lex}

 


 \begin{document}

\title {The saturation number of $\cb$-bounded stable  monomial ideals and their powers }

\author {Reza Abdolmaleki, J\"urgen Herzog and Guangjun Zhu$^{^*}$}

\address{Reza Abdolmaleki: Department of Mathematics, Institute for Advanced Studies in
Basic Sciences (IASBS), 45195-1159 Zanjan, Iran}
\email{abdolmaleki@iasbs.ac.ir}

\address{J\"urgen Herzog: Fachbereich Mathematik, Universit\"at Duisburg-Essen, Campus Essen, 45117
Essen, Germany} \email{juergen.herzog@uni-essen.de}

\address{Guangjun Zhu: School of Mathematical Sciences, Soochow University,
 Suzhou 215006, P. R. China}\email{zhuguangjun@suda.edu.cn(Corresponding author: Guangjun Zhu)}
 \thanks{* Corresponding author}

\thanks{This research is supported by the National Natural Science Foundation of China (No.11271275) and by foundation of the Priority Academic Program Development of Jiangsu Higher Education Institutions.
This paper was written  while the third author was visiting the Department of Mathematics of University Duisburg-Essen, Germany. She  spent a memorable time at Essen, so she would like to express her  hearty thanks to Maja for hospitality. }

\address{}
\email{}

\address{}
\email{}

\dedicatory{ }

\begin{abstract} Let $S=K[x_1,\ldots,x_n]$ be the polynomial ring in $n$ variables over a
field $K$.
In this paper, we compute  the socle of $\cb$-bounded strongly stable ideals and determine that the saturation number of  strongly stable ideals and of equigenerated  $\cb$-bounded strongly stable ideals.
We also  provide   explicit formulas for the saturation number $\sat(I)$  of  Veronese type ideals $I$. Using this formula, we show that $\sat(I^k)$ is quasi-linear from the beginning and we determine the quasi-linear function explicitly.
\end{abstract}

\subjclass[2010]{Primary 13F20; Secondary  13H10, 05E40.}


\keywords{saturation number, quasi-linear function, $\cb$-bounded strongly stable   ideal,  Veronese type ideal}

\maketitle

\setcounter{tocdepth}{1}


\section*{Introduction}
In recent years there has been a lot of work on algebraic and homological properties of powers of graded  ideals in the polynomial ring $S=K[x_1,\ldots,x_n]$, where $K$ is a field. Typically, many of the invariants known  behave asymptotically well, that is, stabilize  or show a regular behaviour  for sufficiently high powers of $I$. Classical examples of this feature are Brodmann's results  \cite{B1} and \cite{B2} which say that $\depth S/I^k$ is constant for $k\gg 0$ and $\Ass(I^{k+1})=\Ass(I^k)$ for $k\gg 0$, or the result by Cutkosky, Herzog, Trung \cite{CHT} and Kodyalam \cite{K} which says that the regularity of $I^k$ is a linear function for $k\gg 0$.

Recently it was noted in \cite{HKM} that for $k\gg 0$, $\sat(I^k)$ is a quasi-linear function provided $I$ is a monomial ideal. Here, $\sat(I)$ denotes the saturation number of a graded ideal $I\subset S$, that is, the smallest number $\ell$ for which $I:\mm^{\ell+1}=I:\mm^\ell$, where $\mm=(x_1,\ldots, x_n)$ is the unique graded maximal ideal of $S$. Such number exists because $S$ is Noetherian and $I\subseteq I:\mm\subset I:\mm^2\subseteq\ldots $. The ideal $I^{\sat}=\Union_{\ell\geq 0}(I:\mm^\ell)$ is called the saturation of $I$. Thus $\sat(I)$  tells us how many steps are needed to reach $I^{\sat}$.

If $I\subset S$ is a strongly stable ideal, then $\sat(I)=\max\{\ell\:\; \text{ $x_n^\ell|u$  for $u\in G(I)$}\}$, see Theorem~\ref{proffind}. Here, $G(I)$ denotes the unique minimal set of monomial generators of $I$. From this result one easily deduces (Corolary~\ref{plus})  that for two strongly stable ideals $I$ and $J$,  one  has $\sat(IJ)\leq \sat(I)+\sat(J)$ with equality if $I$ and $J$ are equigenerated. If either $I$ or $J$ is not equigenerated, then this inequality may be strict, and it fails to be true if the ideals $I$ and $J$ are not strongly stable. For example, if we consider the ideal $I=(x_1x_2,x_1x_3,x_2x_3)$,  then $\sat(I)=0$ and $\sat(I^2)=1$. Of course, $I$ is not strongly stable, but it is squarefree strongly stable.  More generally we may consider $\cb$-bounded strongly stable ideals, where $\cb\in \ZZ^n$ is an integer vector. We call $I$ to be $\cb$-bounded strongly stable, if $I$ is a monomial ideal, and (i) for all $u=x_1^{a_1}\cdots x_n^{a_n}\in G(I)$ we have $a_i\leq c_i$,  and (ii) whenever  $u\in G(I)$ and  $i<j$  with $x_j|u$  and  $x_iu/x_{j}$ is $\cb$-bounded, it follows that $x_iu/x_{j}\in I$.

In the first section we consider the socle of $\cb$-bounded strongly stable ideals and prove in Theorem~\ref{d-1} that if $I$ is such an ideal and is generated in degree $d$, then $I:\mm=I+J$, where $J$ is generated in degree $d-1$ and  is $(\cb-\eb)$-bounded strongly stable.  Here, $\eb=(1,1,\ldots,1)$.

In Section~2  we determine that the saturation number of equigenerated  $\cb$-bounded strongly stable ideals and prove on Theorem~\ref{sat} for such an ideal $I$, the saturation number of $I$ is the  maximal number $\ell$  for which  there exists  $u\in G(I)$  such that  $x_n^\ell|u$ and   the multidegree of $x_n^\ell|u$ is componentwise bounded above by $\cb-\ell\eb$. Examples show that this formula for $\sat(I)$ may fail, when $I$ is not equigenerated or $I$ is only a stable ideal. In Section~3 we  apply  the formula for $\sat(I)$  given in Theorem~\ref{sat}  to determine the function $f(k)=\sat(I^k)$ when $I$ is a $\cb$-bounded principal strongly stable ideal, see Corollary~\ref{borelpower}. For the proof we need a fact, shown in Theorem~\ref{soclepower} that the $k$th power  of $\cb$-bounded principal strongly stable ideal is a  $k\cb$-bounded principal strongly stable ideal. This may fail, if $I$ is an equigenerated strongly stable but not principal strongly stable and it also may fail if $I$ is principal  stable but not strongly stable.

In the last section we make we give a more explicit formula for $\sat(I)$ when $I$ is an ideal of Veronese type. Given a positive integers  $n$, an integer $d$ and   an  integer vector $\ab=(a_1,\ldots,a_n)$ with
 $a_1\geq a_2\geq \cdots\geq a_n$, one defines the
monomial ideal  $I_{\ab,n,d}\subset S=K[x_1,\ldots,x_n]$ with
\[
G(I_{\ab,n,d})=\{x_1^{b_1}x_2^{b_2}\cdots x_n^{b_n}\; \mid \; \sum_{i=1}^nb_i=d \text{ and  $b_i\leq a_i$ for $i=1,\ldots,n$}\}.
\]
Ideals of this type are called of {\em Veronese type}. It is obvious that $I_{\ab,n,d}$ is $\cb$-bounded strongly stable.  The converse is not always true. In Theorem~\ref{satveronese} it is shown that if  $I_{\ab,n,d}$ is  a  Veronese type ideal with $n>1$, $d\geq 0$,  $a_1\geq a_2\geq \cdots \geq a_n\geq 0$ and $\sum_{i=1}^na_i\geq d$. Then  $\sat(I_{\ab,n,d})=\min\bigg\{\bigg\lfloor \frac{\sum\limits_{i=1}^{n}a_i-d}{n-1}\bigg\rfloor, a_n, d\bigg\}$,
where $\lfloor  a\rfloor$ is the largest integer less than or equal  to $a$.

For any monomial ideal, the function $f(k)=\sat(I^k)$ is quasi-linear for $k\gg 0$, as noticed in \cite{HKM}. We use the formula for the saturation number of   a Veronese type ideal to show in Theorem~\ref{qusilear} that for Veronese type ideals, $\sat(I^k)$ is quasi-linear from the very beginning and we determine the quasi-linear function explicitly.

\section{The socle of  $\cb$-bounded stable ideals}

Let $K$ be a field and  $S=K[x_1,\ldots,x_n]$  be the polynomial ring over $K$ in the variables $x_1,\ldots,x_n$. The set of monomials of $S$ will be denoted by $\Mon(S)$. Let $u\in \Mon(S)$, then $u=x_1^{a_1}\cdots x_n^{a_n}$ and we write $u=\xb^\ab$ where $\ab=(a_1,\ldots,a_n)$.  The multidegree of $u$ is defined to be $\Deg(u)=\ab$. We also  set $m(u)=\max\{i\:\; a_i\neq 0\}$.
An ideal $I\subset S$ is called a {\em monomial ideal} if it is generated by monomials.
The unique minimal set of monomial generators
of $I$ will be denoted by $G(I)$.

Let $\cb=(c_1,\ldots,c_n)$ be an integer vector with $c_i\geq 0$. The monomial  $u=x_1^{a_1}\cdots x_n^{a_n}$  is called {\em $\cb$-bounded},
 $\ab\leq \cb$, that is, $a_i\leq c_i$  for all $i$.
Let $I$ be a monomial ideal generated by the monomials $u_1,\ldots, u_m$.  We set
\[
I^{\leq \cb}=(u_i\:\; \text{ $u_i$  is  $\cb$-bounded}).
\]

\begin{Definition}
{\em Let $I\subset S$ be a  $\cb$-bounded monomial ideal.}
\begin{enumerate}
{\em \item[(a)] $I$ is called {\em $\cb$-bounded  stable} if for all $u\in G(I)$ and all $i<m(u)$ for which   $x_iu/x_{m(u)}$ is $\cb$-bounded, it follows that $x_iu/x_{m(u)}\in I$.}
{\em \item[(b)]  $I$ is called {\em $\cb$-bounded  strongly stable} if for all  $u\in G(I)$ and all $i<j$  with $x_j|u$  and    $x_iu/x_{j}$ is $\cb$-bounded, it follows that $x_iu/x_{j}\in I$.}
\end{enumerate}
\end{Definition}

Let $u_1,\ldots,u_m\in \Mon(S)$ be $\cb$-bounded. The smallest $\cb$-bounded strongly stable ideal containing $u_1,\ldots,u_m$ is denoted by $B^\cb(u_1,\ldots,u_m)$. A monomial ideal $I$ is called a $\cb$-bounded  strongly  stable principal ideal, if there exists a $\cb$-bounded monomial $u$ such that $I=B^{\cb}(u)$.  The smallest strongly stable ideal containing $u_1,\ldots,u_m$ (with no restrictions on the exponents) is denoted $B(u_1,\ldots,u_m)$. The monomials $u_1,\ldots,u_m$ are called {\em Borel generators}  of $I=B(u_1,\ldots,u_m)$.

Similar definitions can be made for stable ideals. The $\cb$-bounded smallest stable ideal containing $u_1,\ldots, u_m$ will be denoted by $\B^\cb(u_1,\ldots,u_m)$, and the  elements $u_1,\ldots, u_m$ are called {\em stable Borel generators} of $I=\B^\cb(u_1,\ldots,u_m)$.

\medskip
Let $I\subset S$ be a graded ideal.   We have   the following ascending chain of ideals $I\subseteq I:\mm\subseteq I:\mm^2\subseteq \ldots$. Since $S$ is Noetherian, there exists an integer $k\geq 0$ such  that $I:\mm^k=I:\mm^{k+1}$. We set
\[
\sat(I)=\min\{k\:\; I:\mm^k=I:\mm^{k+1}\}.
\]

\medskip
We start with the following result.

\begin{Theorem}
\label{proffind}
Let $I$ be a strongly stable ideal.  Then
\[
\sat(I)=\max\{\ell:\; x_n^{\ell}|u\ \ \text{for some }u\in G(I)\}.
\]
\end{Theorem}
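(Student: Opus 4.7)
Set $s=\max\{\ell: x_n^\ell\mid u \text{ for some } u\in G(I)\}$. The plan is to first reduce the computation of $I:\mm^k$ to the simpler colon $I:x_n^k$, and then bound $\sat(I)$ from above and below by $s$ by analyzing which minimal generators of $I$ can divide $ux_n^k$.

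The crucial reduction I will establish is the identity $I:\mm^k=I:x_n^k$ for every $k\geq 0$, where the nontrivial inclusion is $\supseteq$. I will first note that the strongly stable property extends from $G(I)$ to all monomials of $I$: if $w\in I$ and $x_j\mid w$ with $i<j$, then $x_iw/x_j\in I$. This is obtained by writing $w=gh$ with $g\in G(I)$ and distinguishing the cases $x_j\mid g$ and $x_j\mid h$. Given $u\in I:x_n^k$ and any monomial $v=x_{i_1}\cdots x_{i_k}$ of degree $k$, I can then apply this extended property $k$ times to $ux_n^k$ to successively swap each $x_n$ for one of the $x_{i_j}$, obtaining $uv\in I$ and hence $u\in I:\mm^k$.

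With this reduction in place, the upper bound $\sat(I)\leq s$ follows from the following observation: if $ux_n^{k+1}\in I$ with $k\geq s$, then any $g=g'x_n^b\in G(I)$ (with $x_n\nmid g'$) dividing $ux_n^{k+1}$ must satisfy $g'\mid u$ and $b\leq s\leq k$, so $g$ already divides $ux_n^k$; thus $I:x_n^{k+1}\subseteq I:x_n^k$. For the lower bound, I choose $g\in G(I)$ whose $x_n$-exponent equals $s$ and write $g=g'x_n^s$. Then $g'\in I:x_n^s=I:\mm^s$ trivially, while if $g'$ were also in $I:\mm^{s-1}=I:x_n^{s-1}$, some $h=h'x_n^b\in G(I)$ would divide $g'x_n^{s-1}$, giving $h'\mid g'$ and $b\leq s-1$, which forces $h$ to be a proper divisor of $g$ and contradicts minimality. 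The main obstacle is the reduction $I:\mm^k=I:x_n^k$; once it is established the bounds reduce to bookkeeping on minimal generators.
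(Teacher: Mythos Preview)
Your proof is correct and complete. It differs from the paper's in structure: the paper proceeds by induction on $s$, using only the single-step identity $I:\mm=I:x_n$ together with the fact that $I:\mm$ is again strongly stable, and then explicitly describes $G(I:\mm)$ to see that the maximal $x_n$-exponent drops by exactly one. Your argument instead establishes the full identity $I:\mm^k=I:x_n^k$ in one stroke (via the extended strongly-stable property and the swapping argument), and then handles both inequalities $\sat(I)\le s$ and $\sat(I)\ge s$ by direct divisibility bookkeeping on $G(I)$, with no induction. What the paper's approach buys is a concrete description of $G(I:\mm)$ along the way, which is of independent interest and is reused later in the paper; what your approach buys is a shorter, induction-free path to the saturation number itself, since the reduction $I:\mm^k=I:x_n^k$ makes the two bounds almost immediate.
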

\begin{proof} Let $s=\max\{\ell:\; x_n^{\ell}|u \ \text{for some } u\in G(I)\}$. Let $G(I)=\{u_1,\ldots, u_m\}$.
We prove the statement by induction on $s$. We use repeatedly the fact that $I:\mm=I:x_n$ and that $I:\mm$ is strongly stable  is strongly stable, because  $I$ is  strongly stable.

If $s=0$, then  $x_n\nmid u_i$ for $i=1,\ldots,m$. It follows that $I:\mm=I:x_n=I$. Hence $\sat(I)=0$.
Now we assume that $s\geq 1$.  Furthermore, we may assume that $x_n\nmid u_\ell$ for $\ell=1,\ldots,i$, and
$x_n|u_\ell$ for $\ell=i+1,\ldots,m$. Then $G(I:\mm)=\{u_{i_1},\ldots, u_{i_t},u_{i+1}/x_n,\ldots, u_m/x_n\}$,   where $\{u_{i_1},\ldots, u_{i_t}\}$ is a suitable subset of $\{u_1,\ldots,u_i\}$.  Indeed, $\{u_1,\ldots,u_i, u_{i+1}/x_n.\ldots u_m/x_n\}$ is a set  of generators of $I:\mm$. Suppose $u_r|u_t/x_n$ for some $1\leq r\leq i$ and $i+1\leq t\leq m$. Then $x_nu_r$ divides $u_t$, a contradiction. It is also clear that $u_r/x_n$ and $u_t/x_n$  can not divide each other, unless $r=t$. This shows that the monomials $u_{i+1}/x_n,\ldots, u_m/x_n$ belong to $G(I:\mm)$, and this yields the assertion.

It follows that $\max\{\ell:\; x_n^\ell|v  \ \text{for some } v\in G(I:\mm)\}=s-1$.
By our induction  hypothesis, we have $\sat(I:\mm)=s-1$. Hence $\sat(I)=\sat(I:\mm)+1=s$.
\end{proof}

\begin{Corollary}
\label{plus}
Let $I$ and $J$ be two strongly stable ideals, then $IJ$ is a strongly stable ideal and
$\sat(IJ)\leq \sat(I)+\sat(J)$.
If $I$ and  $J$ are equigenerated, then $\sat(IJ)=\sat(I)+\sat(J)$.
\end{Corollary}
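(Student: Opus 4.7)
The plan is to derive both statements from Theorem~\ref{proffind} applied to strongly stable ideals, after first establishing that $IJ$ remains strongly stable. The proof then splits into the inequality (which holds unconditionally) and the reverse inequality (which uses the equigenerated hypothesis in an essential way).

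First I would verify that $IJ$ is strongly stable. Any monomial $w\in IJ$ is divisible by some product $uv$ with $u\in G(I)$ and $v\in G(J)$, and given $i<j$ with $x_j\mid w$, the variable $x_j$ must divide $w/(uv)$, $u$, or $v$. In each case strong stability of one of the three factors $w/(uv)$, $I$, or $J$ allows one to replace $x_j$ by $x_i$ and stay inside $IJ$.  Thus $IJ$ is a strongly stable ideal, and Theorem~\ref{proffind} applies to it, giving
\[
\sat(IJ)=\max\{\ell\,:\, x_n^{\ell}\mid w \text{ for some } w\in G(IJ)\}.
\]

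For the inequality, the observation is that every $w\in G(IJ)$ can be written as $w=uv$ with $u\in G(I)$ and $v\in G(J)$: as above, some product $u_0v_0$ with $u_0\in G(I)$, $v_0\in G(J)$ divides $w$, and since $u_0v_0\in IJ$, minimality of $w$ forces $w=u_0v_0$. Writing $u=x_n^{a}u'$ and $v=x_n^{b}v'$ with $x_n\nmid u'v'$, the exponent of $x_n$ in $w$ is $a+b$. Theorem~\ref{proffind} applied to $I$ and $J$ separately yields $a\le\sat(I)$ and $b\le\sat(J)$, so $\sat(IJ)\le\sat(I)+\sat(J)$.

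For the equality in the equigenerated case, suppose $I$ is generated in degree $d_I$ and $J$ in degree $d_J$. By Theorem~\ref{proffind} choose $u\in G(I)$ with $x_n^{\sat(I)}\mid u$ and $v\in G(J)$ with $x_n^{\sat(J)}\mid v$, and set $w=uv\in IJ$. Since $I$ and $J$ are equigenerated, every element of $IJ$ has degree at least $d_I+d_J=\deg(w)$, so no proper divisor of $w$ lies in $IJ$; hence $w\in G(IJ)$. The exponent of $x_n$ in $w$ is exactly $\sat(I)+\sat(J)$, giving the reverse inequality via Theorem~\ref{proffind}.

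The main point requiring care is the identification $w=uv$ for $w\in G(IJ)$ and the subsequent minimality argument in the equigenerated case; the strong stability of $IJ$ and the additive bound on the $x_n$-exponent are then essentially immediate from Theorem~\ref{proffind}. The equigenerated hypothesis enters only in the last step, to guarantee that the natural candidate $w=uv$ cannot be refined to a smaller generator of $IJ$.
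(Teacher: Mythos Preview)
Your proof is correct and follows essentially the same route as the paper: establish that $IJ$ is strongly stable, observe that every $w\in G(IJ)$ factors as $uv$ with $u\in G(I)$, $v\in G(J)$ to get the inequality from Theorem~\ref{proffind}, and use the equigenerated hypothesis to show the specific product $uv$ realizing the maximal $x_n$-exponent actually lies in $G(IJ)$. Your write-up is in fact more careful than the paper's---the paper simply asserts $G(IJ)\subseteq\{u_iv_j\}$ and, in the equigenerated case, $G(IJ)=\{u_iv_j\}$ without the minimality arguments you supply; one minor stylistic point is that calling the cofactor $w/(uv)$ ``strongly stable'' is a slight abuse of language (it is just a monomial, and the swap there is trivial), but the intended argument is clear and correct.
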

\begin{proof} Let $w\in IJ$ and $x_j|w$. We may write $w=uv$ with $u\in I$, $v\in J$  and may assume $x_j|u$.
For any  $i<j$, we get $x_iu/x_j\in I$ since $I$ is strongly stable. It follows that $x_iw/x_j=(x_iu/x_j)v\in IJ$. Hence $IJ$ is  strongly stable ideal.

Let $G(I)=\{u_1,\dots,u_r\}$, $G(J)=\{v_1,\ldots,v_s\}$ and $G(IJ)=\{w_1,\ldots,w_t\}$, Then
$\{w_1,\ldots,w_t\}\subseteq \{u_1v_1,\ldots,u_1v_s, u_2v_1,\ldots,u_rv_s\}$. It follows from Theorem~\ref{proffind} that
\[
\sat(IJ)\leq \sat(I)+\sat(J).
\]
If $I$ and  $J$ are equigenerated, then $\{w_1,\ldots,w_t\}=\{u_1v_1,\ldots,u_1v_s, u_2v_1,\ldots,u_rv_s\}$. Thus $\sat(IJ)=\sat(I)+\sat(J)$ from Theorem~\ref{proffind}.
\end{proof}

\begin{Remark}
\label{funny}
{\em
(a) We may have  $\sat(IJ)< \sat(I)+\sat(J)$    if $I$ and $J$ are strongly stable but either $I$ or $J$  is not equigenerated.
For example, let $I=B(x_2^2x_3^2, x_1x_3)$, $J=B(x_1x_3^2, x_2^2x_3)$.  Then $\sat(I)=\sat(J)=2$ and   $\sat(IJ)=3$.

(b) If $I$ and $J$ are not strongly stable, then none of the inequalities of Corollary~\ref{plus} may be valid. For example, if $I=(x_1x_2,x_1x_3,x_2x_3)$, then $1=\sat(I^2)>2\sat(I)=0$.
}
\end{Remark}

 Note that the ideal in the example of  Remark~\ref{funny}(b) is a principal squarefree strongly stable, but fails the inequality given Corollary~\ref{plus} even for powers. Observe  that squarefree monomial ideals are $(1,1,\ldots,1)$-bounded.
Therefore, for the rest of the paper,  we try at least to understand the behaviour the function $f(k)=\sat (I^k)$, when $I$
is a $\cb$-bounded  strongly stable principal ideal.

\medskip
Let  $\eb=(1,1,\ldots,1)\in \ZZ^n$. Then we have
\begin{Theorem}
\label{d-1}
Let $I$ be a non-zero  $\cb$-bounded stable ideal generated in degree $d$. Then $I:\mm=I+J$,  where $J$ is a $(\cb-\eb)$-bounded  ideal generated in degree $d-1$. Indeed,
\[
J=(u/x_n\:\; u\in G(I),\; x_n|u \text{ and }  \Deg (u/x_n)\leq \cb-\eb).
\]
Moreover, if $I$ be a $\cb$-bounded strongly stable ideal,  then $J$ is a $(\cb-\eb)$-bounded strongly stable ideal.
\end{Theorem}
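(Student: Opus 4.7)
The plan is to prove the equality $I:\mm = I + J$ by verifying both inclusions, and then to read off the degree, boundedness and stability properties of $J$ from its explicit generator description.

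For $I + J \subseteq I:\mm$, I would show that each listed generator $v/x_n$ of $J$ lies in $I:\mm$, i.e.\ that $x_i(v/x_n) \in I$ for all $i$. The case $i = n$ is immediate from $v \in I$. For $i < n$, the assumption $\Deg(v/x_n) \leq \cb - \eb$ guarantees that $x_i v/x_n$ is $\cb$-bounded; since $m(v) = n$ (because $x_n \mid v$), $\cb$-bounded stability of $I$ then delivers $x_i v/x_n \in I$.

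For the reverse inclusion $I:\mm \subseteq I + J$, take a monomial $w \in I:\mm$ with $w \notin I$. Because $x_n w \in I$, some $v \in G(I)$ divides $x_n w$; the hypothesis $w \notin I$ forces $x_n \mid v$ and hence $v/x_n \mid w$. The heart of the argument is the subclaim $\Deg(v/x_n) \leq \cb - \eb$. If this failed, then since $v$ itself is $\cb$-bounded there would exist $j < n$ for which the $x_j$-exponent of $v$ equals $c_j$. Since $w \in I:\mm$ we have $x_j w \in I$, so one can pick $u \in G(I)$ with $u \mid x_j w$. The $\cb$-bound on $u$ forces the $x_j$-exponent of $u$ to be at most $c_j$, while $v/x_n \mid w$ implies that the $x_j$-exponent of $w$ is at least $c_j$. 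Combining these with $u \mid x_j w$, a componentwise exponent comparison yields $u \mid w$, contradicting $w \notin I$. Hence $v/x_n$ is one of the listed generators of $J$, and $w \in J$.

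The degree statement and the $(\cb-\eb)$-boundedness of $J$ are now immediate from its description. For the ``moreover'' clause, assume $I$ is strongly stable, fix a listed generator $v/x_n$ of $J$, and take $i < j$ with $x_j \mid v/x_n$ such that $x_i(v/x_n)/x_j$ is $(\cb-\eb)$-bounded. Setting $u_1 = x_i v/x_j$, the hypothesis on $u_1/x_n$ translates into $u_1$ being $\cb$-bounded, so $u_1 \in I$ by strong stability of $I$. The key observation is that $I$ is generated in a single degree $d$: since $\deg u_1 = d$, any generator of $I$ dividing $u_1$ must coincide with $u_1$, so $u_1 \in G(I)$. A short check verifies $x_n \mid u_1$ in both subcases $j < n$ and $j = n$ (the latter relying on the $x_n$-exponent of $v$ being at least $2$, which is forced by $x_n \mid v/x_n$), and $\Deg(u_1/x_n) \leq \cb - \eb$ holds by hypothesis; hence $u_1/x_n$ appears as a listed generator of $J$. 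The main technical hurdle is the exponent comparison in the reverse inclusion, where the $\cb$-bounded structure of $I$ plays its essential role.
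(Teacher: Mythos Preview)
Your proof is correct and takes a genuinely different, more elementary route than the paper. The paper establishes $I:\mm = I+J$ by invoking homological machinery: it writes $I=\B(u_1,\ldots,u_m)^{\leq \cb}$ using a lemma from \cite{HMRZ}, reads the top syzygies of the unrestricted stable ideal off the Eliahou--Kervaire resolution, passes to $I$ via the Restriction Lemma of \cite{HHZ}, and then applies a general socle lemma (the $K$-basis of $(I:\mm)/I$ coming from the shifts in $F_{n-1}$ via Koszul homology). By contrast, your argument is purely combinatorial: the forward inclusion is a direct stability check, and the reverse inclusion is handled by the exponent-comparison contradiction showing that if $\Deg(v/x_n)$ were to hit $c_j$ in some coordinate $j<n$, then any minimal generator dividing $x_jw$ would already divide $w$. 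This bypasses all resolution-theoretic input and gives a self-contained proof. The paper's approach has the advantage of fitting into a framework (the socle lemma and Restriction Lemma are reused elsewhere), while yours is shorter and requires no external results. For the ``moreover'' clause the two arguments are essentially the same; you are in fact slightly more careful in verifying $x_n\mid u_1$ and $u_1\in G(I)$, which the paper leaves implicit.
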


For the proof of the theorem we need the following

\begin{Lemma}
\label{socleres}
Let $I\subset S$ be a   monomial ideal with minimal   multigraded free $S$-resolution
\[
0\to F_{n-1}\to \cdots \to F_1\to F_0\to I\to 0.
\]
Suppose that $F_{n-1}= \Dirsum\limits_{i=1}^{r} S(-\ab_i)$. Then the elements $\xb^{\ab_i}/x_1\cdots x_n$ ($i=1,\ldots,r$) are monomials in $S$ and
\[
(\xb^{\ab_1}/x_1x_2\cdots x_n)+I, \ldots, (\xb^{\ab_r}/x_1x_2\cdots x_n)+I
\]
is  a $K$-basis of $(I:\mm)/I$.
\end{Lemma}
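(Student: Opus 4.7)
The plan is to identify $(I:\mm)/I$ with the socle of $S/I$, compute $\Tor_n^S(S/I,K)$ from two different free resolutions, and match the resulting multigraded decompositions. Since $(I:\mm)/I=(0:_{S/I}\mm)=\Soc(S/I)$ is annihilated by $\mm$, it is a finite-dimensional multigraded $K$-vector space. Because $I$ is a monomial ideal, each multigraded piece of $(I:\mm)/I$ has a basis consisting of monomial classes $w+I$ with $w\notin I$ and $x_iw\in I$ for all $i$.

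First I would compute $\Tor_n^S(S/I,K)$ using the resolution of $S/I$. Appending $S\to S/I\to 0$ to the given resolution of $I$ produces a minimal multigraded free resolution of $S/I$ of length $n$ whose top term is $F_{n-1}=\Dirsum_{i=1}^{r}S(-\ab_i)$. By minimality all differentials land in $\mm F_j$, hence
\[
\Tor_n^S(S/I,K)\;=\;F_{n-1}\tensor_S K\;=\;\Dirsum_{i=1}^{r} K(-\ab_i).
\]

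Next I would compute the same $\Tor$ from the Koszul resolution $\mathcal{K}_\bullet$ of $K$ on $x_1,\dots,x_n$, so that $\Tor_n^S(S/I,K)=H_n(\mathcal{K}_\bullet\tensor_S S/I)$. Since $\mathcal{K}_n=S(-\eb)$ is generated in multidegree $\eb$ by $e_1\wedge\cdots\wedge e_n$ and there is no term in higher homological degree, this homology equals the kernel of the Koszul differential; in multidegree $\ab$ the kernel consists of classes $m+I$ with $\Deg(m)=\ab-\eb$ such that $x_im\in I$ for every $i$. Hence
\[
\Tor_n^S(S/I,K)\;\iso\;\bigl((I:\mm)/I\bigr)(-\eb).
\]

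Comparing the two formulas in each multidegree, the copy of $K$ sitting in multidegree $\ab_i$ on the resolution side forces a one-dimensional piece of $(I:\mm)/I$ in multidegree $\ab_i-\eb$. By the monomial basis observation above, this piece must be spanned by $\xb^{\ab_i-\eb}+I=(\xb^{\ab_i}/x_1\cdots x_n)+I$; in particular $\ab_i\geq\eb$ componentwise, so the stated quotients are genuine monomials of $S$, and together they form a $K$-basis of $(I:\mm)/I$. The only delicate point is tracking the shift by $\eb$ arising from the top of the Koszul complex; once that convention is fixed, the basis description is immediate.
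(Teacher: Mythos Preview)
Your proof is correct and follows essentially the same route as the paper: both compute $\Tor_n^S(S/I,K)$ two ways, once from the minimal free resolution of $S/I$ and once via the Koszul complex, and then match multidegrees to extract the monomial basis of $(I:\mm)/I$. The paper phrases the Koszul computation as $H_n(x_1,\ldots,x_n;S/I)=(I:\mm/I)\,e_1\wedge\cdots\wedge e_n$, which is exactly your shift-by-$\eb$ observation in different notation.
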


\begin{proof}
 There exists the following isomorphisms of graded modules.
\[
\Dirsum_{i=1}^rK(-\ab_i)\iso \Tor_{n-1}(K, I)\iso \Tor_n(K, S/I)\iso H_n(x_1,\ldots,x_n;S/I).
\]
Here $H_n(x_1,\ldots,x_n;S/I)$ denotes the $n$th Koszul homology of $S/I$ with respect to the sequence $x_1,\ldots,x_n$.
Note that $H_n(x_1,\ldots,x_n;S/I)=(I:\mm/I)\bigwedge^n E$, where $E=\Dirsum_{i=1}^n(S/I)e_i$,   and hence $(I:\mm/I)\bigwedge^n E=(I:\mm/I)e_1\wedge e_2\wedge \cdots \wedge e_n$.

Therefore,  for each $i$  there exists $z_i:=(u_i+I)e_1\wedge e_2\wedge \cdots \wedge e_n\in H_n(x_1,\ldots,x_n;S/I)$, where $u_i\in I:\mm$ is   a monomial,  and $z_i$ has  multidegree $\ab_i$. Moreover, $z_1,\ldots,z_r$  is a $K$-basis  of $H_n(x_1,\ldots,x_n;S/I)$. This implies that $u_1+I, \ldots,u_r+I$ is a $K$-basis of $I:\mm/I$. Comparing multidegrees we see that $u_i=\xb^{\ab_i}/x_1\cdots x_n$ for $i=1,\ldots,r$.
\end{proof}

\begin{proof}[Proof of Theorem~\ref{d-1}] Since $I$ is a non-zero $\cb$-bounded stable ideal, there exist $\cb$-bounded monomials $u_1,\ldots,u_m\in I$ of degree $d$ such that $I=\B^{\cb}(u_1,\ldots,u_m)$. By \cite[Lemma~\ref{principalbound}]{HMRZ} we have   $\B^{\cb}(u_i)  =  \B(u_i)^{\leq \cb}$  for all $i$. Hence
\begin{eqnarray}
\label{cb}
I&=&\B^{\cb}(u_1,\ldots,u_m)=\B^{\cb}(u_1)+\cdots +\B^{\cb}(u_m)=\B(u_1)^{\leq \cb}+\cdots +\B(u_m)^{\leq \cb}\\
&=&\B(u_1,\ldots,u_m)^{\leq \cb}.\nonumber
\end{eqnarray}

Let $\GG$ be the  minimal  multigraded free $S$-resolution of $\B(u_1,\ldots,u_m)$. By the theorem of Eliahou-Kervaire \cite{EK} it follows that  $G_{n-1}= \Dirsum_i S(-\ab_i)$  where for each $\ab_i$  the monomial $\xb^{\ab_i}$ is of the form $x_1\cdots x_{n-1}u$  with $u\in G(I)$ and  $m(u)=n$. Let $\FF$ be the  minimal multigraded free $S$-resolution of $I$. From  (\ref{cb}) and the Restriction Lemma (\cite[Lemma 4.4]{HHZ}) it follows that
$F_{n-1}= \Dirsum_i S(-\ab_i)$  where for  each $\ab_i$  the monomial $\xb^{\ab_i}$ is of the form $x_1\cdots x_{n-1}u$  with $u\in G(I)$,   $m(u)=n$
and $\Deg(x_1\cdots x_{n-1}u)\leq \cb$.

Lemma~\ref{socleres} implies that the elements $u/x_n$ with $u\in G(I)$,  $x_n|u$ and $\Deg(u/x_n)\leq\cb-\eb$ are the generators of $J$.

Now assume that $I$ is a $\cb$-bounded strongly stable ideal in degree $d$. Let $w\in G(J)$ and assume that $x_j|w$  and $w'=x_i(w/x_j)$ is $(\cb-\eb)$-bounded. Then $v=wx_n\in G(I)$, and since  $I$  is  $\cb$-bounded strongly stable and  $v'=x_i(v/x_j)$ is $\cb$-bounded, it follows that $v'\in G(I)$. This  implies that $w'=v'/x_n\in J$.
\end{proof}

\begin{Remark}
\label{reza}{\em
(a) The second part  of Theorem~\ref{d-1} is not satisfied for $\cb$-bounded stable ideals. For example, the ideal $I=(x_1^3, x_1^2x_2, x_1x_2^2, x_1x_2x_3) \subset K[x_1,x_2,x_3]$ is  a $\cb$-bounded stable ideal of degree $3$, where $\cb=(3,2,1)$, and  $J=(x_1x_2)$. The ideal $J$ is not  $(\cb-\eb)$-bounded stable, because $x_1^2\not\in J$.

(b) The second part  of Theorem~\ref{d-1} is not satisfied if $I$  is not   equigenerated, even if it is $\cb$-bounded strongly stable. Indeed,  $I=(x_1^3,x_1^2x_2^2,x_1^2x_2x_3,x_1^2x_3^2)$ be a $\cb$-bounded strongly stable ideal where $\cb=(3,2,2)$. But  $J=(x_1^2x_2,x_1^2x_3)$
 is not  $(\cb-\eb)$-bounded  strongly stable ideal.}
\end{Remark}

\medskip

\section{The saturation number for $\cb$-bounded strongly stable ideals}

Let $I\subset S$ be a graded ideal. Then we define the saturation of $I$ to be the ideal $$I^{\sat}=\Union_k (I:\mm^k).$$
If $I\subset S$ is a monomial ideal. For each $\ell\geq 1$, the $K$-vector space $(I:\mm^{\ell})/(I:\mm^{\ell-1})$ has unique $K$-basis of the form $u_1+(I:\mm^{\ell-1}),u_2+(I:\mm^{\ell-1}),\ldots, u_r+(I:\mm^{\ell-1})$, where the $u_i$ are monomials. We set
\[
J_0(I)=I \text{ and } J_\ell(I)=(u_1,\ldots,u_r) \text{ if }\ell\geq 1.
\]

\begin{Lemma}
\label{d-1already}
Let $I$ be a monomial ideal with $d$-linear resolution. Then $J_1(I)$ is generated in degree $d-1$.
\end{Lemma}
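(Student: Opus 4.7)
The plan is to apply Lemma~\ref{socleres} directly. Since $J_1(I)$ by definition is generated by representatives of a monomial $K$-basis of $(I:\mm)/I$, and Lemma~\ref{socleres} identifies such a basis in terms of the top of the minimal multigraded free resolution of $I$, I only need to track multidegrees.

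Concretely, let $\FF : 0\to F_{n-1}\to \cdots\to F_1\to F_0\to I\to 0$ be the minimal multigraded free $S$-resolution of $I$, and write $F_{n-1}=\Dirsum_{i=1}^{r} S(-\ab_i)$. By Lemma~\ref{socleres}, the monomials $u_i:=\xb^{\ab_i}/(x_1\cdots x_n)$ lie in $S$, and their residue classes form a $K$-basis of $(I:\mm)/I$. Hence by the definition of $J_1(I)$, we have $J_1(I)=(u_1,\ldots,u_r)$.

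Now I use the $d$-linearity hypothesis. Since $I$ has a $d$-linear resolution, for every $i$ the free module $F_i$ is generated in degree $d+i$; in particular $F_{n-1}$ is generated in degree $d+n-1$, so $|\ab_i|=d+n-1$ for each $i$. Therefore
\[
\deg u_i \;=\; \deg\bigl(\xb^{\ab_i}/(x_1\cdots x_n)\bigr) \;=\; (d+n-1)-n \;=\; d-1
\]
for every $i=1,\ldots,r$, which is exactly the claim that $J_1(I)$ is generated in degree $d-1$.

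The only subtle point is verifying that the monomial representatives given by Lemma~\ref{socleres} actually coincide with the canonical choice of generators defining $J_1(I)$; but any monomial $K$-basis of $(I:\mm)/I$ yields the same monomial ideal (these monomials are precisely the minimal monomial generators of $I:\mm$ that are not in $I$), so there is no ambiguity. No further obstacle is expected; the proof is essentially a degree count on top of the socle description in Lemma~\ref{socleres}.
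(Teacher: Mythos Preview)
Your proof is correct and follows essentially the same approach as the paper: apply Lemma~\ref{socleres} to identify the generators of $J_1(I)$ as $\xb^{\ab_i}/(x_1\cdots x_n)$, then use $d$-linearity to conclude each has degree $(d+n-1)-n=d-1$. Your final remark about the uniqueness of the monomial $K$-basis is a helpful clarification that the paper leaves implicit.
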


\begin{proof}
Let $\FF$ be the multigraded minimal free resolution of $I$, and let $F_{n-1}=\Dirsum_{i=1}^rS(-\ab_i)$. By Lemma~\ref{socleres},  $J_1(I)$ is generated by the monomials  $\xb^{\ab_i}/x_1\cdots x_n$. Since $I$ has $d$-linear resolution, it follows that $\deg \xb^{\ab_i}=d+n-1$ for all $i$. Therefore,  $J_1(I)$ is generated  by monomials of degree  $d-1$.
\end{proof}

\begin{Lemma}
\label{jk}
With the assumptions and notation introduced we have
\begin{enumerate}
\item[(a)] $I:\mm^\ell =\sum_{k=0}^\ell J_k(I)$.
\item[(b)]$\sat(I)=\max\{\ell\:\; J_\ell(I)\neq 0\}$.
\item[(c)] $I^{\sat} =\sum_{\ell\geq 0} J_\ell(I)$.
\item[(d)] Let $\ell\geq 0$. Suppose that $J_i(I)$ has a $(d-i)$-linear resolution for $i=0,\ldots,\ell$. Then $J_\ell(I):\mm=J_\ell(I)+J_{\ell+1}(I)$. In particular, $J_1(J_\ell(I))=J_{\ell+1}(I)$.
\end{enumerate}
\end{Lemma}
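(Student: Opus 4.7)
Parts (a)--(c) fall out quickly from the definition of $J_\ell(I)$. For (a), I would induct on $\ell$: the case $\ell=0$ is just $I:\mm^0=I=J_0(I)$, while the inductive step is the identity $I:\mm^\ell=(I:\mm^{\ell-1})+J_\ell(I)$, immediate from the fact that the monomial generators of $J_\ell(I)$ represent a $K$-basis of $(I:\mm^\ell)/(I:\mm^{\ell-1})$. For (b), one rewrites $\sat(I)=\min\{\ell : I:\mm^\ell=I:\mm^{\ell+1}\}$: by (a) this minimum equals $\min\{\ell : J_{\ell+1}(I)=0\}$, and since $J_{\ell+1}(I)=0$ forces every later $J_{\ell+k}(I)$ to vanish, this is $\max\{\ell : J_\ell(I)\neq 0\}$. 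Part (c) comes from taking the union of the equalities in (a) over all $\ell$.

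The bulk of the work is (d), which I would prove as a double inclusion. For $J_\ell(I)+J_{\ell+1}(I)\subseteq J_\ell(I):\mm$, the $J_\ell(I)$-summand is trivial. For a monomial generator $w$ of $J_{\ell+1}(I)$, i.e.\ a monomial $w\in I:\mm^{\ell+1}\setminus I:\mm^\ell$, each $x_iw$ lies in $I:\mm^\ell$; and if $x_iw\in I:\mm^{\ell-1}$ then $w\in I:\mm^\ell$, a contradiction. Hence each $x_iw$ is a monomial in $I:\mm^\ell\setminus I:\mm^{\ell-1}$, so $x_iw\in J_\ell(I)$, giving $w\in J_\ell(I):\mm$.

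For the reverse inclusion $J_\ell(I):\mm\subseteq J_\ell(I)+J_{\ell+1}(I)$, the identity $J_\ell(I):\mm=J_\ell(I)+J_1(J_\ell(I))$---the $\ell=1$ case of (a) applied to the monomial ideal $J_\ell(I)$---reduces the problem to showing $J_1(J_\ell(I))\subseteq J_{\ell+1}(I)$. Since $J_\ell(I)$ has a $(d-\ell)$-linear resolution by hypothesis, Lemma~\ref{d-1already} says $J_1(J_\ell(I))$ is generated in degree $d-\ell-1$, so it suffices to show that each such degree-$(d-\ell-1)$ monomial generator $v$ lies in $J_{\ell+1}(I)$. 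Clearly $\mm v\subseteq J_\ell(I)\subseteq I:\mm^\ell$, so $v\in I:\mm^{\ell+1}$. On the other hand, (a) gives $I:\mm^\ell=\sum_{k=0}^\ell J_k(I)$, and each $J_k(I)$ is generated in degree $d-k\ge d-\ell>d-\ell-1$; hence $(I:\mm^\ell)_{d-\ell-1}=0$, which forces $v\notin I:\mm^\ell$. Thus $v$ is a monomial in $I:\mm^{\ell+1}\setminus I:\mm^\ell$, i.e.\ $v\in J_{\ell+1}(I)$.

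The ``in particular'' claim $J_1(J_\ell(I))=J_{\ell+1}(I)$ is now essentially free: the $\subseteq$ direction is what we just proved, and for $\supseteq$, a monomial generator $w$ of $J_{\ell+1}(I)$ satisfies $\mm w\subseteq J_\ell(I)$ by the first direction and $w\notin J_\ell(I)$ (as $J_\ell(I)\subseteq I:\mm^\ell$ but $w\notin I:\mm^\ell$), so $w\in J_\ell(I):\mm\setminus J_\ell(I)$, hence $w\in J_1(J_\ell(I))$. The main obstacle in the argument is the degree vanishing $(I:\mm^\ell)_{d-\ell-1}=0$, which is what ties the abstract socle-quotient description of $J_{\ell+1}(I)$ to the concrete statement of Lemma~\ref{d-1already}; it uses essentially the hypothesis that every lower $J_k(I)$ has a linear resolution pinning its generators to degree exactly $d-k$.
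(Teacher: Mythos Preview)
Your treatment of (a)--(c) and of the reverse inclusion in (d) is correct and matches the paper's approach. There is, however, a genuine gap in your argument for the inclusion $J_{\ell+1}(I)\subseteq J_\ell(I):\mm$.

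You fix a monomial generator $w$ of $J_{\ell+1}(I)$ and an index $i$, and then assert that ``if $x_iw\in I:\mm^{\ell-1}$ then $w\in I:\mm^\ell$.'' This implication is false for a single $i$: from $\mm^\ell w=\sum_j x_j\mm^{\ell-1}w$ one sees that $w\in I:\mm^\ell$ only if $x_jw\in I:\mm^{\ell-1}$ for \emph{every} $j$. So you cannot rule out that some $x_iw$ lies in $I:\mm^{\ell-1}$ (and hence possibly outside $J_\ell(I)$) while another $x_jw$ does not. Notice that your argument for this inclusion does not invoke the linear-resolution hypothesis at all; but without that hypothesis the inclusion can fail, so the hypothesis must enter somewhere.

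The paper closes this gap by first proving that $J_{\ell+1}(I)$ is generated in degree exactly $d-\ell-1$, using precisely the hypothesis on $J_0(I),\ldots,J_\ell(I)$: since $\mm J_{\ell+1}(I)\subseteq I:\mm^\ell$ and the latter has no elements below degree $d-\ell$, the generators of $J_{\ell+1}(I)$ have degree at least $d-\ell-1$; and a generator $u$ of degree $\ge d-\ell$ would have $x_iu\in I:\mm^\ell$ of degree $\ge d-\ell+1$, forcing $x_iu\in I:\mm^{\ell-1}$ for all $i$ (here one uses that $J_\ell(I)$ has a $(d-\ell)$-linear resolution, so the socle of $I:\mm^\ell$ over $I:\mm^{\ell-1}$ sits only in degree $d-\ell$), contradicting $u\notin I:\mm^\ell$. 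Once $\deg w=d-\ell-1$, your own degree argument $(I:\mm^{\ell-1})_{d-\ell}=0$ shows $x_iw\notin I:\mm^{\ell-1}$ for every $i$, and the rest of your proof goes through unchanged.
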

\begin{proof} (a), (b) and (c) are obvious.

Proof of (d): We may assume that $J_\ell(I)\neq 0$, otherwise the assertion is trivial. We have $I:\mm^\ell +J_{\ell+1}(I)=I:\mm^{l+1}$. Therefore, $\mm J_{\ell+1}(I) \subset \sum_{k=0}^\ell J_k(I)$. It follows that the generators of $J_{\ell+1}(I)$ have degree $\geq d-\ell-1$, since by (a) and our assumption  the least degree of generators of $I:\mm^\ell$ is $d-\ell$. Assume $J_{\ell+1}(I)$ has a monomial generator $u$ with $\deg(u)\geq d-\ell$. Then $x_i u\in \sum_{k=0}^{\ell-1}J_k(I)=I:\mm^{\ell-1}$ for $1\leq i\leq n$. Therefore, $u\in I:\mm^{\ell}$, a contradiction. This shows all generators of $J_{\ell+1}(I)$  are of degree $d-\ell-1$. Now let $u\in G(J_{\ell+1}(I))$. Then  $x_iu\in J_{\ell+1}(I)$ for all $i$.     This implies $J_\ell(I):\mm\supseteq J_\ell(I)+J_{\ell+1}(I)$.

Let  $u\in G(J_\ell(I):\mm)$. Then $\deg u=d-\ell-1$, and hence $u\not\in I:\mm^{\ell}$. Thus  $u\in J_{\ell+1}(I)$, and this implies that $J_\ell(I):\mm\subseteq J_\ell(I)+J_{\ell+1}(I)$.

Notice that  $J_\ell(I)+J_{\ell+1}(I)=J_\ell(I):\mm=J_\ell(I)+J_1(J_{\ell}(I))$ and  $J_{\ell+1}(I)$ is generated in degree $d-\ell-1$ from the above proof. Since $J_\ell(I)$ has a $(d-\ell)$-linear resolution, we get $J_1(J_\ell(I))$ is generated in degree $d-\ell-1$. It follows that
$J_1(J_\ell(I))=J_{\ell+1}(I)$.
\end{proof}

\medskip
Now, we prove the main results of this section.

\begin{Theorem}
\label{sat}
Let $I$ be an  equigenerated $\cb$-bounded strongly stable ideal. Then
\begin{enumerate}
\item[(a)] for all $\ell\geq 1$,  $J_{\ell}(I)$ is a $(\cb-\ell\eb)$-bounded strongly stable ideal generated in degree $d-\ell$,
and
\[
J_{\ell}(I)= (u/x_n^{\ell}\:\; u\in G(I),\; x_n^\ell|u \text{ and }  \Deg (u/x_n^{\ell})\leq \cb-\ell\eb) \text { if } J_{\ell-1}(I)\neq 0.
\]
\item[(b)] $\sat(I)$ is the maximal number $\ell$  for which  there exists  $u\in G(I)$  such that  $x_n^\ell|u$ and   $\Deg(u/x_n^\ell)\leq\cb-\ell\eb$.
\end{enumerate}
\end{Theorem}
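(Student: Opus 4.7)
The plan is to prove part (a) by induction on $\ell$ and deduce part (b) as an immediate consequence of Lemma~\ref{jk}(b). The key engine driving the induction is Theorem~\ref{d-1} applied not just to $I$, but to each of the ideals $J_\ell(I)$ in turn; this requires knowing that each $J_\ell(I)$ is again equigenerated $\cb'$-bounded strongly stable for a suitable shifted bound $\cb'$, and that it has a linear resolution so that Lemma~\ref{jk}(d) can be invoked to identify $J_1(J_\ell(I))$ with $J_{\ell+1}(I)$.

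For the base case $\ell=1$: the ideal $I$ is equigenerated $\cb$-bounded strongly stable, so by formula~(\ref{cb}) we have $I=\B(u_1,\dots,u_m)^{\leq\cb}$; by Eliahou--Kervaire the ideal $\B(u_1,\dots,u_m)$ has a $d$-linear resolution, and the Restriction Lemma then gives that $I$ itself has a $d$-linear resolution. Lemma~\ref{d-1already} ensures $J_1(I)$ is generated in degree $d-1$, and Theorem~\ref{d-1} yields $I:\mm=I+J$ with $J$ generated in degree $d-1$ having the claimed form. Since $J_1(I)$ is precisely the monomial part of $(I:\mm)/I$ in degree $d-1$, we get $J_1(I)=J$, which is $(\cb-\eb)$-bounded strongly stable as asserted. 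For the inductive step, assume the statement for $\ell$. Then $J_\ell(I)$ is equigenerated $(\cb-\ell\eb)$-bounded strongly stable in degree $d-\ell$, so the same Eliahou--Kervaire/Restriction-Lemma argument gives it a $(d-\ell)$-linear resolution. Thus all $J_i(I)$ for $i\leq\ell$ have $(d-i)$-linear resolutions, and Lemma~\ref{jk}(d) yields $J_{\ell+1}(I)=J_1(J_\ell(I))$. Applying Theorem~\ref{d-1} to $J_\ell(I)$ with the bound $\cb-\ell\eb$ identifies $J_1(J_\ell(I))$ with
\[
\bigl(v/x_n \: v\in G(J_\ell(I)),\ x_n\mid v,\ \Deg(v/x_n)\leq \cb-(\ell+1)\eb\bigr),
\]
which is $(\cb-(\ell+1)\eb)$-bounded strongly stable in degree $d-\ell-1$. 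Substituting the inductive description $v=u/x_n^\ell$ with $u\in G(I)$, $x_n^\ell\mid u$, $\Deg(u/x_n^\ell)\leq\cb-\ell\eb$, and observing that the stronger bound $\Deg(u/x_n^{\ell+1})\leq\cb-(\ell+1)\eb$ automatically entails the weaker one, produces the claimed formula for $J_{\ell+1}(I)$ in terms of $G(I)$.

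Part (b) then follows directly: Lemma~\ref{jk}(b) gives $\sat(I)=\max\{\ell\: J_\ell(I)\neq 0\}$, and part (a) shows that $J_\ell(I)\neq 0$ precisely when there exists $u\in G(I)$ with $x_n^\ell\mid u$ and $\Deg(u/x_n^\ell)\leq\cb-\ell\eb$; monotonicity of this condition in $\ell$ (existence at level $\ell+1$ forces existence at level $\ell$) ensures no gap can occur in the chain of nonvanishing $J_\ell$. The main technical obstacle is the bookkeeping required to propagate both structural properties simultaneously along the induction: at each stage we must know that $J_\ell(I)$ is \emph{equigenerated} $(\cb-\ell\eb)$-bounded strongly stable (so that the Eliahou--Kervaire plus Restriction Lemma pipeline supplies a linear resolution and Theorem~\ref{d-1} applies verbatim), and conversely the linear resolution is what licenses the identification $J_{\ell+1}(I)=J_1(J_\ell(I))$ through Lemma~\ref{jk}(d). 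The examples in Remark~\ref{funny} and Remark~\ref{reza} indicate why the equigenerated hypothesis is essential, as the analogous formula is known to fail without it.
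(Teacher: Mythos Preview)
Your proposal is correct and follows essentially the same inductive strategy as the paper: establish the base case via Theorem~\ref{d-1}, then at each step apply Theorem~\ref{d-1} to $J_{\ell}(I)$ and unwind the description back to $G(I)$. Your argument is in fact more careful than the paper's on one point: the paper passes directly from ``$J_{\ell-1}(I)$ is $(\cb-(\ell-1)\eb)$-bounded strongly stable'' to a statement about $J_\ell(I)$ via Theorem~\ref{d-1}, silently identifying $J_\ell(I)$ with $J_1(J_{\ell-1}(I))$; you make this identification explicit by invoking Lemma~\ref{jk}(d), and you justify its linear-resolution hypothesis via Eliahou--Kervaire plus the Restriction Lemma applied at each stage. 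For part~(b) the paper argues by setting $s=\sat(I)$, $k=$ the claimed maximum, and deriving a contradiction from $s<k$, while you use the equivalent monotonicity observation; both are immediate from part~(a) and Lemma~\ref{jk}(b).
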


\begin{proof}
We prove (a) by induction on $\ell$. For $\ell=1$, the assertion from  Therem~\ref{d-1}.   Now let $\ell\geq 2$,  and assume that (a) holds for $\ell-1$. Since by induction hypothesis $J_{\ell-1}(I)$  is $(\cb-(\ell-1)\eb)$-bounded strongly stable, again using Theorem \ref{d-1}, we obtain  $J_{\ell}(I)$ is a $(\cb-\ell\eb)$-bounded strongly stable ideal generated in degree $d-\ell$ and
\[
J_{\ell}(I)= \{v/x_n\:\; v\in G(J_{\ell-1}(I)),\;  x_n|v \text{ and }  \Deg (v/x_n)\leq \cb-\ell\eb\}.
\]
The induction hypothesis implies that
\[
G(J_{\ell-1}(I))= \{u/x_n^{\ell-1}\:\; u\in G(I),\; x_n^{\ell-1}|u \text{ and }  \Deg (u/x_n^{\ell-1})\leq \cb-(\ell-1)\eb\}.
\]
It follows that $v$ is of the form $u/x_n^{\ell-1}$,  where $x_n^{\ell-1}|u$ and $\Deg (u/x_n^{\ell-1})\leq \cb-(\ell-1)\eb$. Hence $v/x_n$ has the form $u/x_n^{\ell}$,  where $x_n^{\ell}|u$ and $\Deg (u/x_n^{\ell})\leq \cb-\ell\eb$, as desired.

(b) Let $s=\sat(I)$ and  $k$ be the maximal number  $\ell$ with the properties described in part (b) of the theorem.
Then $J_s(I)\neq 0$,  by Lemma \ref{jk} (b). This implies that $J_{s-1}(I)\neq 0$. By (a), we get $J_{s}(I)= (u/x_n^{s}\:\; u\in G(I),\; x_n^s|u \text{ and }  \Deg (u/x_n^{s})\leq \cb-s\eb)$. It follows that $s\leq k$.  Suppose that  $s<k$. Then $J_{s+1}(I)\neq 0$,  This contradicts Lemma~\ref{jk}(b).
\end{proof}

\begin{Remark}{\em
(a) Part (b)   of Theorem~\ref{sat} does not hold if $I$ is not equigenerated.
For example, let $I=(x_1,x_2^4,x_2^3x_3,x_2^2x_3^2)\subset K[x_1,x_2,x_3]$, then $I$ is $\cb$-bounded strongly stable where $\cb=(1,4,2)$. By CoCoA, we get $sat(I)=2$. But
$\max\{k\: x_n^k|u \text{ and  } \Deg(u/x_n^k)\leq\cb-k\eb \text{ for $u\in G(I)$}\}=1$.

(b) By the example in Remark~\ref{reza}(a),  $J_1(I)$ need  not to be stable if $I$ is an equigenerated stable ideal. Therefore, we can not apply an induction argument as used in Theorem~\ref{sat}(b).   Nevertheless,  Theorem~\ref{sat}(b) may be valid for any stable equigenerated monomial ideal,  as many explicit examples indicate.}
\end{Remark}

\medskip

\section{The saturation number of powers of $\cb$-bounded strongly stable monomial ideals}

Let $u, v$ be $\cb$-bounded monomials of same degree $d$.   Then we write $v\prec_\cb u$ if and only if  $v\in B^\cb(u)$. This is a partial order on the $\cb$-bounded monomials of degree $d$. We also write $v\prec u$ if and only if $v\in B(u)$.

\begin{Theorem}
\label{soclepower}
Let $u=x_{i_1}\cdots x_{i_d}$ be a $\cb$-bounded monomial in $S$ with $i_1\leq i_2\leq \cdots \leq i_d$ and $I=B^{\cb}(u)$. Then for any positive integer $k$
\begin{itemize}
\item[(a)] $I^k=B^{k\cb}(u^k)$;
\item[(b)] $I^k:\mm=I^k+B^{k\cb-\eb}(u^k/x_n)$, if $i_d=n$, otherwise $I^k:\mm=I^k$;
\item[(c)]for all $\ell\geq 0$ such that $x_n^\ell|u^k$,  $J_{\ell}(I^k)=B^{k\cb-\ell\eb}(u^k/x_n^\ell)$.
\end{itemize}
\end{Theorem}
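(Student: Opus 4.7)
The plan is to establish the three statements in order: first (a), then (c) via Theorem~\ref{sat} combined with (a), and finally (b) via (c) and Theorem~\ref{d-1}. For (a), the containment $B^{k\cb}(u^k)\subseteq I^k$ is the easier direction. By iterating Corollary~\ref{plus}, $I^k$ is strongly stable, and every minimal generator of $I^k$ has the form $v_1\cdots v_k$ with $v_i\in G(I)$; since each $v_i$ is $\cb$-bounded, the product is $k\cb$-bounded, so $I^k$ is in fact $k\cb$-bounded strongly stable. As $u^k\in I^k$, this forces $B^{k\cb}(u^k)\subseteq I^k$. For the reverse inclusion, I verify that every minimal generator $w=v_1\cdots v_k$ of $I^k$ satisfies $w\preceq u^k$ in the Borel order, which together with $k\cb$-boundedness places $w$ in $B(u^k)^{\leq k\cb}=B^{k\cb}(u^k)$ by the principal-bound lemma of \cite{HMRZ}. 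The Borel inequality $v_1\cdots v_k\preceq u^k$ is proved by the standard sorted-index characterization of Borel order: writing the indices of each monomial in nondecreasing order, $v_i\preceq u$ translates to componentwise inequality of the sorted sequences, and this property is preserved when sorted sequences are merged, yielding $v_1\cdots v_k\preceq u^k$.

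For (c), part (a) tells us that $I^k$ is equigenerated and $k\cb$-bounded strongly stable in degree $kd$, so Theorem~\ref{sat}(a) applies and gives
\[
J_\ell(I^k)=(w/x_n^\ell\:\; w\in G(I^k),\; x_n^\ell\mid w,\; \Deg(w/x_n^\ell)\leq k\cb-\ell\eb).
\]
Setting $w'=w/x_n^\ell$ and invoking the principal-bound identity $B^{k\cb}(u^k)=B(u^k)^{\leq k\cb}$, the set $G(I^k)$ is precisely the set of $k\cb$-bounded monomials of degree $kd$ lying Borel-below $u^k$. A direct check shows that $(k\cb-\ell\eb)$-boundedness of $w'$ is equivalent to $k\cb$-boundedness of $x_n^\ell w'$ together with $x_n^\ell\mid x_n^\ell w'$; what remains is the equivalence $x_n^\ell w'\preceq u^k\iff w'\preceq u^k/x_n^\ell$, valid whenever $x_n^\ell\mid u^k$. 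One direction is routine: any chain of Borel moves from $u^k/x_n^\ell$ to $w'$, multiplied by the inert factor $x_n^\ell$, yields Borel moves from $u^k$ to $x_n^\ell w'$. For the reverse, the sorted-index description shows that the last $\ell$ entries of the sorted-index sequence of $x_n^\ell w'$ are pinned to $n$, and similarly for $u^k$ under $x_n^\ell\mid u^k$; the dominating inequalities on the first $kd-\ell$ coordinates then transfer verbatim to $(w',u^k/x_n^\ell)$. Thus $J_\ell(I^k)=B(u^k/x_n^\ell)^{\leq k\cb-\ell\eb}=B^{k\cb-\ell\eb}(u^k/x_n^\ell)$.

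Finally, (b) follows from (c) with $\ell=1$ together with Theorem~\ref{d-1}: when $i_d=n$, $x_n$ divides $u$ and hence $u^k$, so $I^k:\mm=I^k+J_1(I^k)=I^k+B^{k\cb-\eb}(u^k/x_n)$; when $i_d<n$, $u^k$ is supported in $\{x_1,\dots,x_{n-1}\}$, and since Borel moves only decrease indices the same holds for every $v\preceq u^k$, so no generator of $I^k$ is divisible by $x_n$ and hence $I^k:\mm=I^k:x_n=I^k$. The main technical step is the Borel-order equivalence $x_n^\ell w'\preceq u^k\iff w'\preceq u^k/x_n^\ell$ underlying (c), together with the compatibility of Borel order with multiplication used in (a). Both reduce transparently to sorted-index computations; the only subtlety is confirming that the last $\ell$ coordinates of the sorted-index sequence of $u^k$ are locked to $n$, which is precisely the content of $x_n^\ell\mid u^k$.
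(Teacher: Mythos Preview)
Your argument for part~(a) has a genuine gap in the direction $B^{k\cb}(u^k)\subseteq I^k$. You invoke Corollary~\ref{plus} to conclude that $I^k$ is strongly stable, but that corollary applies only to strongly stable ideals in the \emph{unbounded} sense, whereas $I=B^{\cb}(u)$ is in general only $\cb$-bounded strongly stable. These notions differ: for instance, with $n=3$, $\cb=(3,1,2)$ and $u=x_1x_2x_3^2$, one has $x_1x_2x_3^2\in I$ but $x_2(x_1x_2x_3^2)/x_3=x_1x_2^2x_3\notin I$ (it is not $\cb$-bounded, hence not a generator, and $I$ is equigenerated in degree~4). So $I$ is not strongly stable, and Corollary~\ref{plus} does not apply. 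Without it, you have not shown that $I^k$ is $k\cb$-bounded strongly stable, which is exactly what you need to force $B^{k\cb}(u^k)\subseteq I^k$. Note also that the paper's remark immediately following this theorem shows that products of $\cb$-bounded strongly stable ideals need not be $2\cb$-bounded strongly stable, so there is no cheap analogue of Corollary~\ref{plus} to fall back on; the principal hypothesis is essential here.

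The paper fills this gap by an explicit decomposition: given $w=x_{\ell_1}\cdots x_{\ell_{kd}}\in B^{k\cb}(u^k)$ with sorted indices, one distributes the $\ell_s$ in round-robin fashion to form $v_1,\ldots,v_k$, checks that each $v_t\preceq u$ via the sorted-index criterion, and uses the $k\cb$-bound on $w$ to see that each $v_t$ is $\cb$-bounded. This combinatorial step is the real content of~(a), and your shortcut bypasses it illegitimately.

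Once~(a) is correctly established, your route through Theorem~\ref{sat}(a) for~(c), followed by~(b) as the case $\ell=1$, is a legitimate alternative to the paper's order (which proves~(b) directly and then~(c) by induction on~$\ell$). One small wording issue: in your treatment of~(c), the claimed ``equivalence'' between $(k\cb-\ell\eb)$-boundedness of $w'$ and $k\cb$-boundedness of $x_n^\ell w'$ is false as a biconditional (the former is strictly stronger on the coordinates $i<n$); what you actually need, and what suffices, is the implication $(k\cb-\ell\eb)$-bounded $\Rightarrow$ $k\cb$-bounded, making the extra $k\cb$-bound on $w$ in the description of $J_\ell(I^k)$ redundant.
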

\begin{proof} Let $u^k=x_{j_1}x_{j_2}\cdots x_{j_{kd}}$ with $j_1\leq j_2\leq \cdots \leq j_{kd}$. Then
$j_{tk+1}=j_{tk+2}=\cdots=j_{tk+k}=i_{t+1}$ for $t=0,1,\ldots, d-1$.

(a) The inclusion $I^k\subseteq B^{k\cb}(u^k)$ is obvious. Conversely, let $w=x_{\ell_1}x_{\ell_2}\cdots x_{\ell_{kd}}$ $\in B^{k\cb}(u^k)$
with $\ell_1\leq \ell_2\leq \cdots \leq \ell_{kd}$, then $\ell_{s}\leq j_{s}$ for any $s=1,\ldots,kd$.
Choose $v_1=x_{\ell_1}x_{\ell_k+1}\cdots x_{\ell_{k(d-1)+1}}$, $v_2=x_{\ell_2}x_{\ell_k+2}\cdots x_{\ell_{k(d-1)+2}}$, $\ldots$,
$v_k=x_{\ell_k}x_{\ell_k+k}\cdots x_{\ell_{kd}}$, then $v_i\in B(u)$ for $i=1,\ldots,k$. Since $w$ is $k\cb$-bounded, we get
each $v_i$ is $\cb$-bounded. This implies that $w\in I^k$.

(b) If $i_d<n$, then it is clear $I^k:\mm=I^k$. Now we assume that $i_d=n$.
Since $I^k=B^{k\cb}(u^k)$ is a $k\cb$-bounded strongly stable ideal and since $u^k/x_n\in I^k:\mm$, it follows from Theorem~\ref{d-1} that  $B^{k\cb-\eb}(u^k/x_n)\subseteq I^k:\mm$. Hence
 $I^k+B^{k\cb-\eb}(u^k/x_n)\subseteq I^k:\mm$.  Conversely,
$v\in G(I^k:\mm)\setminus I^k$, then $\mm v\subset I^k$, where   $\deg(v)=kd-1$ and $\Deg(v)\leq k\cb-\eb$
by Theorem \ref{d-1}. Let $v=x_{s_1}\cdots x_{s_{kd-1}}$ with $s_1\leq s_2\leq \cdots \leq s_{kd-1}\leq n$, then $x_nv=x_{s_1}\cdots x_{s_{kd-1}}x_n\in B^{k\cb}(u^k)$ by part (a). It follows that $s_\ell\leq j_\ell$ for $\ell=1,\ldots,kd-1$.
 This means that $v\in B^{k\cb-\eb}(u^k/x_n)$.

 (c) We prove the statement  by induction on $\ell$.
For $\ell=1$, the assertion from  (b).   Now let $\ell\geq 2$. By induction  hypothesis we may assume that   $J_{\ell-1}(I^k)=B^{k\cb-(\ell-1)\eb}(u^k/x_n^{\ell-1})$. Then   $J_{\ell-1}(I^k)$ is $(k\cb-(\ell-1)\eb)$-bounded  strongly stable. By Theorem~\ref{d-1}  it follows that  $J_{\ell}(I^k)$ is  $(k\cb-\ell\eb)$-bounded  generated in degree $kd-\ell$  and
\begin{eqnarray}
\label{weneedit} J_{\ell}(I^k)= \{w/x_n\:\; w\in G(J_{\ell-1}(I^k)),\;  x_n|w \text{ and }  \Deg (w/x_n)\leq k\cb-\ell\eb\}.
\end{eqnarray}
Now we prove that $J_{\ell}(I^k)=B^{k\cb-\ell\eb}(u^k/x_n^\ell)$.

Let $v\in B^{k\cb-\ell\eb}(u^k/x_n^\ell)$, then $v\prec u^k/x_n^{\ell}$  and $\Deg (v)\leq k\cb-\ell\eb$. This implies that $x_nv\prec u^k/x_n^{\ell-1}$, and hence  $vx_n\in B^{k\cb-(\ell-1)\eb}(u^k/x_n^{\ell-1})$.  By induction hypothesis, $B^{k\cb-(\ell-1)\eb}(u^k/x_n^{\ell-1})=J_{\ell-1}(I^k)$, and  so  $x_nv\in G(J_{\ell-1}(I^k))$. Hence (\ref{weneedit}) implies that $v\in J_\ell(I^k)$

Conversely, let  $v\in J_{\ell}(I^k)$.  Then by (\ref{weneedit}), $v=w/x_n$, with   $x_n|w$, $w\in G(J_{\ell-1}(I^k))$ and $\Deg (w/x_n)\leq k\cb-\ell\eb$.
It follows that $\Deg (w)\leq k\cb-(\ell-1)\eb$.
Since $J_{\ell-1}(I^k)=B^{k\cb-(\ell-1)\eb}(u^k/x_n^{\ell-1})$ by induction hypothesis, we have $w\prec u^k/x_n^{\ell-1}$ and $w$ is $k\cb-(\ell-1)\eb$-bounded. Since $x_n|w$, we get $x_n|(u^k/x_n^{\ell-1})$ and $w/x_n\prec u^k/x_n^{\ell}$. It follows that  $w/x_n\prec u^k/x_n^{\ell}$ and is $(k\cb-\ell\eb)$-bounded. Hence $J_{\ell}(I^k)\subseteq B^{k\cb-\ell\eb}(u^k/x_n^\ell)$.
\end{proof}

\begin{Remarks}
{\em
(a) The product of two $\cb$-bounded strongly stable  ideals is not necessarily a  $\cb$-bounded strongly stable  ideal.

 For example, let $I=(x_1x_2,x_1x_3,x_1x_4,x_2x_3)\subset K[x_1,x_2,x_3,x_4]$.  Then
\[
I^2=(x_1^2x_2^2,x_1^2x_2x_3,x_1^2x_2x_4,x_1x_2^2x_3,x_1^2x_3^2,
 x_1^2x_3x_4,x_1x_2x_3^2,x_1^2x_4^2,x_1x_2x_3x_4,x_2^2x_3^2).
 \]
The ideal  $I$ is $(1,1,1,1)$-bounded strongly stable. Since $x_1x_2^2x_4=x_2\frac{(x_1x_4)(x_2x_3)}{x_3}\notin I^2$, we see that
 $I^2$ is not $2\cb$-bounded strongly stable. Therefore,  Theorem~\ref{d-1} cannot be used to compute $\sat(I^2)$.

(b) A statement similar  to Theorem~\ref{soclepower} (a) does not hold for $\cb$-bounded stable principal ideals.

For example, let
 $u=x_1x_2x_3\in K[x_1,x_2,x_3]$ and  $\cb=(2,2,2)$. Then $\B^\cb(u)=(x_1^2x_2,x_1x_2^2,x_1x_2x_3)$, and
 \[
 (\B^\cb(u))^2=(x_1^4x_2^2,x_1^3x_2^3,x_1^2x_2^4,x_1^3x_2^2x_3,x_1^2x_2^3x_3,x_1^2x_2^2x_3^2).
 \]
On the other hand,
\[
\B^{2\cb}(u^2)=(x_1^4x_2^2,x_1^3x_2^3,x_1^2x_2^4,x_1^3x_2^2x_3,x_1^2x_2^3x_3,x_1^2x_2^2x_3^2,x_1^4x_3^2,x_1^3x_2x_3^2,x_1^4x_2x_3).
\]

(c)  In general, $B^{\cb}(u_1)B^{\cb}(u_2)\neq B^{2\cb}(u_1u_2)$. Indeed, let $u_1=x_1x_2^2$, $u_2=x_1x_3^2$ and $\cb=(2,2,2)$. Then \[B^{\cb}(u_1)B^{\cb}(u_2)=(x_1^4x_2^2,x_1^3x_2^3,x_1^2x_2^4,\\
x_1^4x_2x_3,x_1^3x_2^2x_3,x_1^2x_2^3x_3,x_1^3x_2x_3^2,x_1^2x_2^2x_3^2)\]
and
\[B^{2\cb}(u_1u_2)=(x_1^4x_3^2, x_1^4x_2^2,x_1^3x_2^3,x_1^2x_2^4,x_1^4x_2x_3,\\
x_1^3x_2^2x_3,x_1^2x_2^3x_3,x_1^3x_2x_3^2,x_1^2x_2^2x_3^2).\] }
\end{Remarks}

\medskip
For the powers of $\cb$-bounded strongly stable principal ideals, we have
\begin{Corollary}
\label{borelpower}
Let $u=x_1^{a_1}\cdots x_n^{a_n}$ be a $\cb$-bounded monomial in $S$ and $I=B^{\cb}(u)$.  Then for any positive integer $k$
\[
\sat(I^k)=\max\{\ell:\; \text{there exists}\ v\in G(B^{k\cb}(u^k))\ \text{with}\  x_n^\ell|v\ \text{and}\ \Deg(v/x_n^\ell)\leq k\cb-\ell\eb\}.
\]
\end{Corollary}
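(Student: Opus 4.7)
The plan is to combine Theorem~\ref{soclepower}(a) with Theorem~\ref{sat}(b): the former tells us what $I^k$ looks like structurally, and the latter gives a combinatorial description of the saturation number for such ideals.

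First, I would invoke Theorem~\ref{soclepower}(a) to rewrite $I^k = B^{k\cb}(u^k)$. Since $u = x_1^{a_1}\cdots x_n^{a_n}$ is $\cb$-bounded, $u^k$ is $k\cb$-bounded, and the ideal $B^{k\cb}(u^k)$ is, by definition, a $k\cb$-bounded strongly stable ideal. Moreover, every element of $G(B^{k\cb}(u^k))$ has degree $\deg(u^k) = k(a_1+\cdots+a_n)$, so $I^k$ is equigenerated in this single degree.

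Next, I would apply Theorem~\ref{sat}(b), where the role of $\cb$ is played by $k\cb$ and the role of $I$ by $I^k = B^{k\cb}(u^k)$. The theorem yields
\[
\sat(I^k) = \max\bigl\{\ell : \exists\, v \in G(I^k) \text{ such that } x_n^\ell \mid v \text{ and } \Deg(v/x_n^\ell) \leq k\cb - \ell\eb\bigr\},
\]
which, after substituting $G(I^k) = G(B^{k\cb}(u^k))$, is exactly the claimed formula.

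There is no genuine obstacle here: the corollary is essentially the specialization of Theorem~\ref{sat}(b) to the equigenerated $k\cb$-bounded strongly stable ideal $I^k$, and the only nontrivial input is that powers of $\cb$-bounded strongly stable principal ideals remain $k\cb$-bounded strongly stable principal ideals, which is precisely the content of Theorem~\ref{soclepower}(a). One should, however, explicitly check that the hypotheses of Theorem~\ref{sat} are satisfied (equigeneration and $k\cb$-bounded strong stability), since the remarks and examples following Theorem~\ref{sat} warn that the formula fails both when equigeneration is dropped and when strong stability is relaxed to stability; hence it is important to cite Theorem~\ref{soclepower}(a) rather than attempting the analogous statement for merely stable principal ideals.
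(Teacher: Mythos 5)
Your proof is correct and follows exactly the paper's own argument: apply Theorem~\ref{soclepower}(a) to identify $I^k=B^{k\cb}(u^k)$ as an equigenerated $k\cb$-bounded strongly stable ideal, then specialize Theorem~\ref{sat}(b). Your explicit verification of the hypotheses of Theorem~\ref{sat} is a welcome addition, but the route is the same.
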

\begin{proof} From Theorem~\ref{soclepower}(a), we know  $I^k=B^{k\cb}(u^k)$. It follows that $I^k$ is $k\cb$-bounded strongly stable, the desired statement from  Theorem~\ref{sat}.
\end{proof}

A special case of $\cb$-bounded strongly stable principal ideals are the so-called Veronese type ideals, as shown in \cite{HMRZ}. For this class of ideals we have a more precise information about the saturation number. This will be discussed in the next section.

\section{The saturation number of powers of Veronese type ideals}

In this section  we consider a special class of  $\cb$-bounded strongly stable ideals, that is,
 Veronese type ideals.  Given a positive integers  $n$, and an integer $d$ and   an  integer vector $\ab=(a_1,\ldots,a_n)$ with
 $a_1\geq a_2\geq \cdots\geq a_n$, one defines the
monomial ideal  $I_{\ab,n,d}\subset S=K[x_1,\ldots,x_n]$ with
\[
G(I_{\ab,n,d})=\{x_1^{b_1}x_2^{b_2}\cdots x_n^{b_n}\; \mid \; \sum_{i=1}^nb_i=d \text{ and  $b_i\leq a_i$ for $i=1,\ldots,n$}\}.
\]
It is obvious that $I_{\ab,n,d}$ is $\cb$-bounded strongly stable.

For the proof of the next result we need the following simple  result.

\begin{Lemma}
\label{veronsedepth}
The following conditions are equivalent:
\begin{enumerate}
\item[(a)] $I_{\ab,n,d}=0$.
\item[(b)] {\em (i)} $a_i<0$ for some $i$, or {\em(ii)} $\sum_{i=1}^na_i<d$, or {\em(iii)} $d<0$.
\end{enumerate}
\end{Lemma}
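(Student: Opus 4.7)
The plan is to prove the equivalence via the contrapositive for one direction, since the other direction is immediate.

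First I would show $\text{(b)} \Rightarrow \text{(a)}$. Recall that the generators have the form $x_1^{b_1}\cdots x_n^{b_n}$ with nonnegative integer exponents $b_i$ satisfying $0\leq b_i\leq a_i$ and $\sum_{i=1}^n b_i=d$. If $a_i<0$ for some $i$, then there is no nonnegative integer $b_i$ with $b_i\leq a_i$, so the generating set is empty. If $d<0$, then $\sum b_i\geq 0>d$ rules out any solution. Finally, if $\sum_{i=1}^n a_i<d$, then $\sum b_i\leq \sum a_i<d$, again giving an empty generating set.

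Next I would prove $\text{(a)} \Rightarrow \text{(b)}$ by contrapositive. Assume that $a_i\geq 0$ for all $i$, $d\geq 0$, and $\sum_{i=1}^n a_i\geq d$. I claim one can exhibit at least one monomial in $G(I_{\ab,n,d})$, via a greedy construction: define inductively
\[
b_1=\min(a_1,d),\qquad b_i=\min\!\bigl(a_i,\, d-b_1-\cdots-b_{i-1}\bigr)\quad\text{for } 2\leq i\leq n.
\]
An easy induction shows $0\leq b_i\leq a_i$ at every step, and that the partial sum $b_1+\cdots+b_i$ never exceeds $d$. Either the running remainder $d-\sum_{j\leq i}b_j$ hits $0$ at some step $i_0\leq n$ (and then $b_j=0$ for $j>i_0$, yielding $\sum b_j=d$), or the minimum is always the first argument, in which case $b_i=a_i$ for all $i$ and $\sum b_i=\sum a_i\geq d$; but the greedy choice prevents the running total from strictly exceeding $d$, so actually $\sum a_i=d$ in this case. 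Either way $\sum b_i=d$, producing a valid generator and showing $I_{\ab,n,d}\neq 0$.

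There is no serious obstacle here; the only point to be careful with is the implicit assumption that the exponents in a monomial are nonnegative integers, which must be invoked when handling the cases $a_i<0$ and $d<0$. The rest is bookkeeping on the greedy construction.
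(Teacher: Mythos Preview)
Your proof is correct and follows essentially the same strategy as the paper: the paper also declares (b)$\Rightarrow$(a) obvious and proves (a)$\Rightarrow$(b) by contrapositive, constructing an explicit generator $x_1^{a_1}\cdots x_{t-1}^{a_{t-1}}x_t^{r}$ where $t$ is the least index with $\sum_{i\leq t}a_i\geq d$ and $r=d-\sum_{i<t}a_i$, which is exactly your greedy construction written out.
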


\begin{proof} (b) $\Rightarrow$ (a)  is obvious.

(a) $\Rightarrow$ (b) Assume that $d, a_i\geq 0$ for all $i$, and  $\sum_{i=1}^na_i\geq d$.
Let $t$ be the smallest integer such that $\sum_{i=1}^ta_t\geq d$. Then $x_1^{a_1}\cdots x_{t-1}^{a_{t-1}}x_t^r\in I_{\ab,n,d}$, where $r=d-\sum_{i=1}^{t-1}a_i\leq a_t$, a contradiction.
\end{proof}

\medskip
In the following theorem we give a formula for $\sat(I_{\ab,n,d})$.  We assume that $\sum_{i=1}^na_i\geq d$  and $a_n\geq 0$, because otherwise $I_{\ab,n,d}=0$. We also assume that $n>1$. Because if $n=1$, then $\sat((x_1^d))=d$, and nothing is to prove.

\begin{Lemma}
\label{colonveronese}
For any Veronese ideal $I_{\cb,n,g}$  with  $\cb=(c_1,\ldots,c_n)$ and
 $c_1\geq c_2\geq \cdots\geq c_n$,  we have
\[
I_{\cb,n,g}:\mm=I_{\cb,n,g}+I_{\cb-\eb,n, g-1},
\]
where $\eb=(1,1,\ldots,1)$. In particular, $J_1(I_{\cb,n,g})=I_{\cb-\eb,n, g-1}$.
\end{Lemma}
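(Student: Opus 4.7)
The plan is to apply Theorem~\ref{d-1} to $I_{\cb,n,g}$, which is $\cb$-bounded strongly stable and equigenerated in degree $g$. This immediately yields
\[
I_{\cb,n,g}:\mm \;=\; I_{\cb,n,g}+J,\quad J=(u/x_n\,:\,u\in G(I_{\cb,n,g}),\ x_n\mid u,\ \Deg(u/x_n)\le \cb-\eb),
\]
so the first task reduces to identifying $J$ with $I_{\cb-\eb,n,g-1}$ by matching their minimal monomial generating sets.

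I would then unpack the conditions defining $G(J)$. Writing $u=x_1^{b_1}\cdots x_n^{b_n}\in G(I_{\cb,n,g})$, the requirements become $\sum_i b_i=g$, $0\le b_i\le c_i$ for each $i$, $b_n\ge 1$, and $b_i\le c_i-1$ for $i<n$ (the constraint $b_n-1\le c_n-1$ being automatic from $b_n\le c_n$). Putting $b_n':=b_n-1$, the monomial $u/x_n=x_1^{b_1}\cdots x_{n-1}^{b_{n-1}}x_n^{b_n'}$ has total degree $g-1$ and exponents componentwise bounded by $\cb-\eb$, hence lies in $G(I_{\cb-\eb,n,g-1})$. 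Conversely, every generator $v\in G(I_{\cb-\eb,n,g-1})$ produces $u:=x_nv\in G(I_{\cb,n,g})$ with $x_n\mid u$ and $\Deg(u/x_n)=\Deg(v)\le \cb-\eb$, so $v=u/x_n$ is a generator of $J$. Hence $G(J)=G(I_{\cb-\eb,n,g-1})$, and $J=I_{\cb-\eb,n,g-1}$.

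For the ``in particular'' statement, by definition $J_1(I_{\cb,n,g})$ is generated by the monomial representatives of the canonical monomial $K$-basis of $(I_{\cb,n,g}:\mm)/I_{\cb,n,g}$, which coincides with the set of all monomials in $I_{\cb,n,g}:\mm$ not lying in $I_{\cb,n,g}$. Since $I_{\cb,n,g}:\mm=I_{\cb,n,g}+J$ and both summands are monomial ideals, such monomials are precisely the monomials in $J\setminus I_{\cb,n,g}$. The crucial observation is the degree gap: $J=I_{\cb-\eb,n,g-1}$ is generated in degree $g-1$, while $I_{\cb,n,g}$ contains no element of degree below $g$. Therefore every element of $G(J)$ lies in $J\setminus I_{\cb,n,g}$, giving $J\subseteq J_1(I_{\cb,n,g})$, and the reverse inclusion is immediate.

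The only mildly delicate step in this plan is the exponent bookkeeping behind the substitution $b_n\mapsto b_n-1$ used to convert the generator description of $J$ coming from Theorem~\ref{d-1} into the generator description of $I_{\cb-\eb,n,g-1}$; once those two descriptions are cleanly aligned, both halves of the lemma follow at once with no substantive obstacle.
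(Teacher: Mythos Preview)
Your proof is correct and follows essentially the same route as the paper's: both invoke Theorem~\ref{d-1} and then identify the resulting ideal $J$ with $I_{\cb-\eb,n,g-1}$. The paper's identification is slightly more economical---it simply observes that any monomial of degree $g-1$ with multidegree $\leq \cb-\eb$ is by definition a generator of $I_{\cb-\eb,n,g-1}$, bypassing your explicit $b_n\mapsto b_n-1$ bookkeeping---and it also separately disposes of the degenerate cases $I_{\cb,n,g}=0$ and $g=0$ (needed since Theorem~\ref{d-1} is stated for nonzero ideals), which you should add.
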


\begin{proof} If $I_{\cb,n,g}=0$, then $I_{\cb-\eb,n, g-1}=0$ by Lemma~\ref{veronsedepth}. Assume now that $I_{\cb,n,g}\neq 0$. Then
 $g, c_i\geq 0$ for all $i$, and  $\sum_{i=1}^nc_i\geq g$, by Lemma~\ref{veronsedepth}.

  If $g=0$, then $I_{\cb-\eb,n, g-1}=0$ and $I_{\cb,n,g}=(1)$, and the assertion is trivial.

Now we assume that  $g\geq 1$. The inclusion $I_{\cb,n,g}+I_{\cb-\eb,n, g-1}\subseteq I_{\cb,n,g}:\mm$ is obvious. Conversely, let $v\in G(I_{\cb,n,g}:\mm)\setminus I_{\cb,n,g}$, Since  $I_{\cb,n,g}$ is $\cb$-bounded strongly stable, Theorem~\ref{d-1} implies that   $\deg(v)=g-1$  and $\Deg(v)\leq\cb-\eb$. Therefore,  $v\in I_{\cb-\eb,n,g-1}$.

Notice that  $I_{\cb,n,g}+J_{1}(I_{\cb,n,g})=I_{\cb,n,g}:\mm=I_{\cb,n,g}+I_{\cb-\eb,n,g-1}$. Since $I_{\cb,n,g}$ has a $g$-linear resolution, we get $J_1(I_{\cb,n,g})$ is generated in degree $g-1$. It follows that
$J_1(I_{\cb,n,g})=I_{\cb-\eb,n, g-1}$.
\end{proof}

\begin{Theorem}
\label{satveronese}
Let $I_{\ab,n,d}$ be a  Veronese type ideal with $n>1$, $d\geq 0$,  $a_1\geq a_2\geq \cdots \geq a_n\geq 0$ and $\sum_{i=1}^na_i\geq d$. Then
\begin{enumerate}
\item[(a)]  for all $\ell\geq 0$,  $J_{\ell}(I_{\ab,n,d})=I_{\ab-\ell\eb,n,d-\ell}$;
\item[(b)] $\sat(I_{\ab,n,d})=\min\bigg\{\bigg\lfloor \frac{\sum\limits_{i=1}^{n}a_i-d}{n-1}\bigg\rfloor, a_n, d\bigg\}$.
\end{enumerate}
where $\lfloor  a\rfloor$ is the largest integer less than or equal  to $a$.
\end{Theorem}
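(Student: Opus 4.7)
The plan is to prove (a) by induction on $\ell$ and then deduce (b) immediately from (a), Lemma~\ref{jk}(b), and the vanishing criterion in Lemma~\ref{veronsedepth}.

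The base case $\ell=0$ of (a) is just $J_0(I_{\ab,n,d})=I_{\ab,n,d}$ by definition. For the inductive step, assuming the formula for $\ell-1$, I first treat the case $J_{\ell-1}(I_{\ab,n,d})\neq 0$. Since $I_{\ab,n,d}$ is equigenerated in degree $d$ and $\ab$-bounded strongly stable, Theorem~\ref{sat}(a) gives
\[
J_{\ell}(I_{\ab,n,d}) = (u/x_n^{\ell}\:\; u\in G(I_{\ab,n,d}),\; x_n^\ell|u,\; \Deg(u/x_n^{\ell})\leq \ab-\ell\eb).
\]
The key step is to check that $v\mapsto v x_n^{\ell}$ is a bijection between $G(I_{\ab-\ell\eb,n,d-\ell})$ and the generating set on the right-hand side: if $v=x_1^{b_1}\cdots x_n^{b_n}$ has total degree $d-\ell$ and satisfies $b_i\leq a_i-\ell$ for all $i$, then $v x_n^{\ell}$ has total degree $d$ with each exponent bounded by $a_i$, hence lies in $G(I_{\ab,n,d})$; conversely, if $u\in G(I_{\ab,n,d})$ satisfies the conditions on the right, then $u/x_n^{\ell}\in G(I_{\ab-\ell\eb,n,d-\ell})$. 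This forces $J_{\ell}(I_{\ab,n,d})=I_{\ab-\ell\eb,n,d-\ell}$.

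When $J_{\ell-1}(I_{\ab,n,d})=0$, both $J_{\ell}(I_{\ab,n,d})$ and $I_{\ab-\ell\eb,n,d-\ell}$ must also vanish. The former follows from the general fact that $I:\mm^{\ell-1}=I:\mm^{\ell-2}$ forces $I:\mm^\ell=I:\mm^{\ell-1}$, and the latter is derived from the inductive hypothesis $I_{\ab-(\ell-1)\eb,n,d-\ell+1}=0$ by tracking through each of the three vanishing conditions in Lemma~\ref{veronsedepth}; the only non-trivial implication is the middle one, where $\sum_{i=1}^n a_i-n(\ell-1)<d-\ell+1$ yields $\sum_{i=1}^n a_i-n\ell<d-\ell+1-n\leq d-\ell$ precisely because $n>1$.

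For part (b), Lemma~\ref{jk}(b) says $\sat(I_{\ab,n,d})=\max\{\ell\:\; J_\ell(I_{\ab,n,d})\neq 0\}$. Combining this with (a) and Lemma~\ref{veronsedepth}, the saturation number is the largest $\ell$ simultaneously satisfying $\ell\leq a_n$, $(n-1)\ell\leq\sum_{i=1}^n a_i-d$, and $\ell\leq d$, which is precisely $\min\{\lfloor(\sum_{i=1}^n a_i-d)/(n-1)\rfloor,a_n,d\}$. The main obstacle is verifying the bijection in the nonzero case of the induction; the vanishing case amounts to routine bookkeeping once one tracks the three conditions of Lemma~\ref{veronsedepth}.
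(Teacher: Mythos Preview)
Your proof is correct. Part (b) is identical to the paper's argument, but your approach to part (a) differs from the paper's.

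The paper proves (a) by combining Lemma~\ref{colonveronese} (which computes $J_1$ of an arbitrary Veronese type ideal) with Lemma~\ref{jk}(d) (which, under the linear--resolution hypothesis satisfied by Veronese type ideals, gives $J_\ell(I)=J_1(J_{\ell-1}(I))$). The inductive step is then simply
\[
J_\ell(I_{\ab,n,d})=J_1(J_{\ell-1}(I_{\ab,n,d}))=J_1(I_{\ab-(\ell-1)\eb,n,d-(\ell-1)})=I_{\ab-\ell\eb,n,d-\ell},
\]
with no separate vanishing case. You instead invoke Theorem~\ref{sat}(a) directly, which already provides the explicit generating set of $J_\ell$ for any equigenerated $\cb$-bounded strongly stable ideal, and then match that generating set bijectively with $G(I_{\ab-\ell\eb,n,d-\ell})$. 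Your route bypasses Lemma~\ref{colonveronese} and Lemma~\ref{jk}(d) entirely, at the cost of having to handle the case $J_{\ell-1}=0$ by hand via Lemma~\ref{veronsedepth}. Both approaches are short; the paper's is slightly cleaner because the vanishing case is absorbed into Lemma~\ref{colonveronese}, while yours makes more transparent use of the general structure theorem for $J_\ell$ already established in Theorem~\ref{sat}.
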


\begin{proof}
We prove (a) by induction on $\ell$. For $\ell=0$, the assertion is trivial.

Next let $\ell>1$. By induction hypothesis,  $J_{i}(I_{\ab,n,d})=I_{\ab-i\eb,n,d-i}$ for $i=0,\ldots, \ell-1$. Since each $I_{\ab-i\eb,n,d-i}$ has $(d-i)$-linear resolution, we may apply Lemma~\ref{jk}(d),  and together with Lemma~\ref{colonveronese} we obtain
\[
J_{\ell}(I)=J_1(J_{\ell-1}(I))=J_1(I_{\ab-(\ell-1)\eb)\eb,n,d-(\ell-1)})=I_{\ab-\ell\eb,n,d-\ell}.
\]

(b) By Lemma~\ref{jk}(b), we know $\sat(I_{\ab,n,d})=\max\{\ell\:\; J_\ell(I_{\ab,n,d})\neq 0\}$. It follows  from (a) and Lemma~\ref{veronsedepth}
\begin{eqnarray*}
\sat(I_{\ab,n,d})&=&\max\{\ell\:\; I_{\ab-\ell\eb,n,d-\ell}\neq 0\}\\
&=&\max\{\ell\:\; a_n-\ell\geq 0\ \text{and}\ d-\ell\geq 0\ \text{and}\  \sum_{i=1}^n(a_i-\ell)\geq d-\ell \}\\
&=&\max\{\ell\:\; \ell\leq a_n\ \text{and}\  \ell\leq d\ \text{and}\   \ell\leq \frac{\sum_{i=1}^na_i-d}{n-1} \}\\
&=&\min\bigg\{\bigg\lfloor \frac{\sum\limits_{i=1}^{n}a_i-d}{n-1}\bigg\rfloor, a_n, d\bigg\}.
\end{eqnarray*}
\end{proof}

\begin{Corollary}
\label{satveronese}
Let $I_{\ab,n,d}$ be a  Veronese type ideal with $n>1$, $d\geq 0$,  $a_1\geq a_2\geq \cdots \geq a_n\geq 0$ and $\sum_{i=1}^na_i\geq d$. Then for any $k$
\[
\sat((I_{\ab,n,d})^k)=\min\Bigg\{\bigg\lfloor \frac{(\sum\limits_{i=1}^{n}a_i-d)k}{n-1} \bigg\rfloor,ka_n,kd\Bigg\}.
\]
\end{Corollary}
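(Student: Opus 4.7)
The plan is to reduce the corollary to a direct application of Theorem~\ref{satveronese}(b) by establishing the key identity
\[
(I_{\ab,n,d})^k \;=\; I_{k\ab,\, n,\, kd}.
\]
Once this is proved, one simply observes that the hypotheses of Theorem~\ref{satveronese} hold for the vector $k\ab$ (since $ka_1\geq \cdots\geq ka_n\geq 0$ and $\sum_i ka_i=k\sum_i a_i\geq kd$), and then the formula in Theorem~\ref{satveronese}(b) applied to $I_{k\ab,n,kd}$ gives exactly
\[
\sat\bigl(I_{k\ab,n,kd}\bigr)=\min\left\{\left\lfloor \tfrac{k\sum_i a_i-kd}{n-1}\right\rfloor,\, ka_n,\, kd\right\},
\]
which is the claimed expression.

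So the whole weight of the proof rests on the identity $(I_{\ab,n,d})^k=I_{k\ab,n,kd}$. The inclusion $\subseteq$ is immediate: if $u_1,\ldots,u_k\in G(I_{\ab,n,d})$, then $u_1\cdots u_k$ has total degree $kd$ and $i$-th exponent at most $ka_i$, hence lies in $I_{k\ab,n,kd}$. For the reverse inclusion, one takes a monomial $\xb^{\bb}=x_1^{b_1}\cdots x_n^{b_n}$ with $\sum_i b_i=kd$ and $b_i\leq ka_i$, and must factor it as a product of $k$ generators of $I_{\ab,n,d}$. This amounts to producing a nonnegative integer matrix $M=(m_{ij})_{i\in[n],\, j\in[k]}$ with row sums $b_i$, column sums $d$, and entries bounded above by $a_i$ in row $i$. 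Feasibility of such a transportation tableau follows from $\sum_i b_i=\sum_j d=kd$ together with the bounds $b_i\leq ka_i$ and $d\leq \sum_i a_i$; in fact one may fill the matrix greedily column by column, at each step choosing in column $j$ a monomial $\xb^{\cb_j}\in G(I_{\ab,n,d})$ that divides what is left of $\xb^{\bb}$ after the first $j-1$ columns have been extracted. The strongly stable/polymatroidal structure of $I_{\ab,n,d}$ ensures that such a choice is always available as long as the residual vector still has coordinate sum at least $d$ and does not exceed $(k-j+1)\ab$ coordinatewise, a property preserved at each step.

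The main obstacle is therefore the combinatorial lemma that a vector $\bb$ with $\sum b_i=kd$ and $0\leq b_i\leq ka_i$ factors coordinatewise into $k$ vectors each in the $(d,\ab)$-simplex. This is essentially the exchange/transportation property of polymatroids, and for Veronese type ideals it can be proved either by the greedy procedure sketched above or by invoking the known fact that products of polymatroidal ideals remain polymatroidal (so $(I_{\ab,n,d})^k$ is polymatroidal with the same base polytope as $I_{k\ab,n,kd}$, hence the two ideals coincide). With this identity in hand, the corollary is a one-line consequence of Theorem~\ref{satveronese}(b), and no further case analysis on $k$ is required.
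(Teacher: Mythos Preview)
Your approach is essentially the same as the paper's: establish $(I_{\ab,n,d})^k=I_{k\ab,n,kd}$ and then apply Theorem~\ref{satveronese}(b) to $I_{k\ab,n,kd}$. The only difference is that the paper simply cites \cite[Lemma~5.1]{HRV} for this identity, whereas you sketch a direct transportation/polymatroidal argument; your sketch is correct in spirit, though the greedy step (that one can always peel off a factor in $G(I_{\ab,n,d})$ while preserving the bounds $b_i'\leq (k-1)a_i$ on the residual) deserves one more line of justification if you intend it to stand alone.
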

\begin{proof} By \cite[Lemma 5.1]{HRV}, we obtain that $(I_{\ab,n,d})^k=I_{k\ab,n,kd}$, the desired statements follow from
Theorem \ref{satveronese}.
\end{proof}

\medskip
 \begin{Remark}{\em
 The product of two $\cb$-bounded Veronese type ideals is not necessarily a  $\cb$-bounded Veronese type ideal.

 For example, let $\ab=(3,3,1,2)$, $\bb=(2,2,0,1)$,  $\cb=\ab+\bb=(5,5,1,3)$, $d_1=6$,  $d_2=5$, $d_3=d_1+d_2=11$ and  $n=4$.
Then
\begin{eqnarray*}
I_{\ab,n,d_1}\!&\!=\!&\!(x_1^3x_2^3,x_1^3x_2^2x_3,x_1^2x_2^3x_3,x_1^3x_2^2x_4,
x_1^2x_2^3x_4,x_1^3x_2x_3x_4,x_1^2x_2^2x_3x_4,x_1x_2^3x_3x_4,
x_1^3x_2x_4^2,\\
& &x_1^2x_2^2x_4^2,x_1x_2^3x_4^2,x_1^3x_3x_4^2,x_1^2x_2x_3x_4^2,x_1x_2^2x_3x_4^2,x_2^3x_3x_4^2),\\
I_{\bb,n,d_2}&=&(x_1^2x_2^2x_4).
\end{eqnarray*}
It follows that
\begin{eqnarray*}
I_{\ab,n,d_1}\cdot I_{\bb,n,d_2}&=&(x_1^5x_2^5x_4,x_1^5x_2^4x_3x_4,x_1^4x_2^5x_3x_4,
x_1^5x_2^3x_3x_4^2,x_1^4x_2^4x_3x_4^2,x_1^3x_2^5x_3x_4^2,x_1^5x_2^3x_4^3,\\
& &x_1^4x_2^4x_4^3,x_1^3x_2^5x_4^3,x_1^5x_2^2x_3x_4^3,
x_1^4x_2^3x_3x_4^3,x_1^3x_2^4x_3x_4^3,x_1^2x_2^5x_3x_4^3,x_1^3x_2^5x_3x_4^2,\\
& &x_1^5x_2^4x_4^2,x_1^4x_2^5x_4^2).
\end{eqnarray*}
 However
 \begin{eqnarray*}
I_{\cb,n,d_3}&=&(x_1^5x_2^5x_3,x_1^5x_2^5x_4,x_1^5x_2^4x_3x_4,x_1^4x_2^5x_3x_4,
x_1^5x_2^3x_3x_4^2,x_1^4x_2^4x_3x_4^2,x_1^3x_2^5x_3x_4^2,x_1^5x_2^3x_4^3,\\
& &x_1^4x_2^4x_4^3,x_1^3x_2^5x_4^3,x_1^5x_2^2x_3x_4^3,
x_1^4x_2^3x_3x_4^3,x_1^3x_2^4x_3x_4^3,x_1^2x_2^5x_3x_4^3,x_1^3x_2^5x_3x_4^2,x_1^5x_2^4x_4^2,\\
& &x_1^4x_2^5x_4^2).
\end{eqnarray*}
  }
\end{Remark}

\medskip

A function $f: \mathbb{Q}\to \mathbb{Q}$ is called {\em quasi-linear}, if there exists an integer $m\geq 1$ and for each $i=0, \ldots, m-1$, a linear
function $f_i(x)=p_ix+q_i$ with $p_i, q_i\in \mathbb{Q}$ such that $f(k)=f_i(k)$ for $k\equiv i\ \mod m$.

\medskip
For  Veronese type ideals, we can give concrete quasi-linear functions describing the saturation number of the powers.
\begin{Theorem}
\label{qusilear}
Let $I_{\ab,n,d}$ be a    Veronese type ideal  with $n>1$, $d\geq 0$,  $a_1\geq a_2\geq \cdots \geq a_n\geq 0$ and $\sum_{i=1}^na_i\geq d$.
Let $\sum\limits_{i=1}^{n}a_i-d=s(n-1)$ with $s\in\QQ$ and
$t=\min\{ s, a_n, d\}$.
\begin{enumerate}
\item[(a)]If $t=s$, then  $\sat((I_{\ab,n,d})^k)=p_ik+q_i$
where $p_i=s$, $q_i=\lfloor si\rfloor-si$.
\item[(b)]If $t=a_n$, then $\sat((I_{\ab,n,d})^k)=a_nk$.
\item[(c)] If $t=d$, then $\sat((I_{\ab,n,d})^k)=dk$.
\end{enumerate}
\end{Theorem}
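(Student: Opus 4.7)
The plan is to reduce everything to the explicit formula
\[
\sat((I_{\ab,n,d})^k)=\min\bigl\{\lfloor sk\rfloor,\;ka_n,\;kd\bigr\}
\]
obtained from Corollary~\ref{satveronese} after substituting $\sum_{i=1}^n a_i-d=s(n-1)$. With this formula in hand, the theorem becomes a case analysis based on which of the three quantities $s,a_n,d$ realizes the minimum $t$, together with one small fractional-part identity to display the result in the standard quasi-linear form.

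For part (a), $t=s$ means $s\leq a_n$ and $s\leq d$, so $sk\leq ka_n$ and $sk\leq kd$. Since $ka_n$ and $kd$ are integers, these inequalities pass to $\lfloor sk\rfloor\leq ka_n$ and $\lfloor sk\rfloor\leq kd$, hence $\sat((I_{\ab,n,d})^k)=\lfloor sk\rfloor$. To rewrite this as $p_ik+q_i$, I would let $m$ be the denominator of $s$ written in lowest terms (so $m\mid n-1$, since $s=(\sum a_i-d)/(n-1)$). For $k\equiv i\pmod m$, write $k=qm+i$; then $sk=sqm+si$ with $sqm\in\ZZ$, whence
\[
\lfloor sk\rfloor = sqm+\lfloor si\rfloor = sk+(\lfloor si\rfloor - si) = p_ik+q_i
\]
with $p_i=s$ and $q_i=\lfloor si\rfloor-si$, exactly as claimed. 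This verifies quasi-linearity with period dividing $n-1$.

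For parts (b) and (c) the arguments are parallel. If $t=a_n$, then $a_n\leq s$ and $a_n\leq d$, so $ka_n\leq sk$ and $ka_n\leq kd$; since $ka_n$ is an integer, $ka_n\leq\lfloor sk\rfloor$ as well, hence the minimum equals $ka_n$. The case $t=d$ is identical with the roles of $a_n$ and $d$ swapped.

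The only non-routine ingredient is the elementary identity $\lfloor sk\rfloor = sk+(\lfloor si\rfloor-si)$ for $k\equiv i\pmod m$ needed in case (a); everything else is the application of Corollary~\ref{satveronese} and straightforward comparisons of three linear quantities in $k$. I do not anticipate any substantial obstacle.
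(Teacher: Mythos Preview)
Your proposal is correct and follows essentially the same approach as the paper: both reduce immediately to Corollary~\ref{satveronese}, then do the three-way case split and, in case~(a), use the same fractional-part identity to write $\lfloor sk\rfloor = sk + (\lfloor si\rfloor - si)$. The only cosmetic difference is that the paper takes the period to be $n-1$ (writing $k=(n-1)\ell+i$ and using that $s(n-1)=\sum a_i-d\in\ZZ$), whereas you use the denominator $m$ of $s$ in lowest terms; since $m\mid n-1$, both choices yield the stated formulas.
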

\begin{proof}
(a) If $t=s$, then $s\leq \min\{a_n,d\}$. Thus $ks\leq \min\{ka_n,kd\}$.
It follows that $\lfloor ks\rfloor \leq \min\{ka_n,kd\}$.
By Corollary \ref{satveronese},  we obtain $$\sat((I_{\ab,n,d})^k)=\lfloor ks\rfloor.$$
Let $k\equiv i\ \mod(n-1)$, then $k=(n-1)\ell+i$ with $0\leq i <n-1$.
It follows that
\[
 ks=s(n-1)\ell+si=(\sum\limits_{i=1}^{n}a_i-d)\ell+si,
\]
Hence
\begin{eqnarray*}
\lfloor ks\rfloor&=&(\sum\limits_{i=1}^{n}a_i-d)\ell+\lfloor si\rfloor=(\sum\limits_{i=1}^{n}a_i-d)\frac{k-i}{n-1}+\lfloor si\rfloor\\
&=&\frac{\sum\limits_{i=1}^{n}a_i-d}{n-1}(k-i)+\lfloor si\rfloor=s(k-i)+\lfloor si\rfloor\\
&=&sk+\lfloor si\rfloor-si.
\end{eqnarray*}
Choose $p_i=s$, $q_i=\lfloor si\rfloor-si$, we have $\sat((I_{\ab,n,d})^k)=p_ik+q_i$.

(b) If $t=a_n$, then $a_n\leq \min\{s,d\}$. It follows that  $ka_n\leq \min\{ks,kd\}$.
By Corollary \ref{satveronese}, $\sat((I_{\ab,n,d})^k)=a_nk$.

(c) If $t=d$, then $d\leq \min\{s,a_n\}$. It follows that  $kd\leq \min\{ks,ka_n\}$.
By Corollary~\ref{satveronese}, $\sat((I_{\ab,n,d})^k)=dk$.
\end{proof}

\end{document}